\theoremstyle{plain}
\newtheorem{mainthm}{Theorem}
\newtheorem*{Thm*}{Theorem}
\newtheorem*{Claim*}{Claim}
\newtheorem{Thm}{Theorem}[section]
\newtheorem{Cor}[Thm]{Corollary}
\newtheorem{Lem}[Thm]{Lemma}
\newtheorem{Prop}[Thm]{Proposition}
\theoremstyle{definition}		
\newtheorem{Def}[Thm]{Definition}
\newtheorem{Rem}[Thm]{Remark}
\newtheorem{Ex}[Thm]{Example}
\renewcommand{\l}{\mbox{${\langle}$}}
\renewcommand{\r}{\mbox{${\rangle}$}}
\newcommand{\es}{\emptyset}
\DeclareMathOperator{\im}{im}
\DeclareMathOperator{\End}{End}
\renewcommand{\a}{\alpha}
\renewcommand{\b}{\beta}
\newcommand{\g}{\gamma}
\renewcommand{\d}{\delta}
\newcommand{\e}{\varepsilon}
\newcommand{\lam}{\lambda}
\renewcommand{\th}{\theta}
\renewcommand{\phi}{\varphi}
\newcommand{\w}{\omega}
\renewcommand{\phi}{\varphi}
\renewcommand{\bf}{\mathbf}
\newcommand{\N}{\mathbb{N}}
\newcommand{\Z}{\mathbb{Z}}
\newcommand{\Q}{\mathbb{Q}}
\newcommand{\R}{\mathbb{R}}
\newcommand{\AND}{\qquad\text{and}\qquad}
\newenvironment{thmenumerate}{
   \begin{enumerate}[label=\textup{(\roman*)}, widest=(5), leftmargin=10mm]}{
    \end{enumerate}}
\newcommand{\bit}{\begin{itemize}[label=\textbullet, leftmargin=5mm]}
\newcommand{\eit}{\end{itemize}}
\newcommand{\ben}{\begin{thmenumerate}}
\newcommand{\bena}{\begin{enumerate}[label=\textup{(\alph*)},leftmargin=10mm]}
\newcommand{\een}{\end{thmenumerate}}
\newcommand{\eena}{\end{enumerate}}
\begin{document} 
\title{~\\[-10mm]Diameters of endomorphism monoids of chains}
\author{James East, Victoria Gould, Craig Miller, Thomas Quinn-Gregson}
\address{Centre for Reseach in Mathematics and Data Science, Western Sydney University, Australia}
\email{J.East@WesternSydney.edu.au}
\address{Department of Mathematics, University of York, UK, YO10 5DD}
\email{victoria.gould@york.ac.uk, craig.miller@york.ac.uk}
\address{~\vspace{-1em}}
\email{tquinngregson@gmail.com}
\thanks{This work was supported by the grants: FT190100632 of the ARC and EP/V002953/1 of the EPSRC}
\maketitle

\vspace{-1.5em}
\begin{abstract}
The left and right diameters of a monoid are topological invariants defined in terms of suprema of lengths of derivation sequences with respect to finite generating sets for the universal left or right congruences.  We compute these parameters for the endomorphism monoid $\End(C)$ of a chain $C$.  Specifically, if $C$ is infinite then the left diameter of $\End(C)$ is 2, while the right diameter is either 2 or 3, with the latter equal to 2 precisely when $C$ is a quotient of $C{\setminus}\{z\}$ for some endpoint $z$.  If $C$ is finite then so is $\End(C),$ in which case the left and right diameters are 1 (if $C$ is non-trivial) or 0.
\end{abstract}

\vspace{0.5em}
\textit{Keywords}: Diameter, endomorphism monoid, chain, right congruence, generating set, derivation sequence.\\
\textit{Mathematics Subject Classification 2020}: 20M20, 06A05, 20M10.

\section{Introduction}\label{sec:intro}

For a monoid $S$ whose universal right congruence is generated by a finite set $U,$ the right diameter of $S$ with respect to $U$ is the supremum of the minimum lengths of derivations for pairs $(a,b)\in S\times S$ as a consequence of those in $U.$  The right diameter of $S$ is then the minimum of the set of all right diameters with respect to finite generating sets.  Thus, a monoid has finite right diameter if and only if its universal right congruence is finitely generated and there is a bound on the length of sequences required to relate any two elements as a consequence of the generators.  More precise definitions regarding right diameter will be given in Section \ref{sec:diameter}.  The notion of left diameter is dual.

Monoids with finite (left) diameter were introduced by White, under the name `left pseudo-finite', in the context of Banach algebras \cite{White:2017}.  This work was motivated by a conjecture of Dales and {\.Z}elazko, stating that a unital Banach algebra in which every maximal left ideal is finitely generated is necessarily finite dimensional.  

Monoids with finite left diameter were first studied systematically in \cite{Dandan} (again under the name `left pseudo-finite'), within the broader context of monoids having a finitely generated universal left congruence.  The latter condition was shown to be equivalent to a number of previously-studied properties, including the homological finiteness property of being type left-$FP_1$ and connectivity of certain Cayley graphs \cite[Theorem 3.10]{Dandan}\cite{Kob}.

The article \cite{Gould:23} investigated the existence and nature of minimal ideals in monoids with finite diameter.  It was observed that the notion of having right/left diameter 1 can be reformulated in terms of the so-called diagonal acts.  For a monoid $S,$ the diagonal right $S$-act is the set $S\times S$ under the right action given by $(a,b)c=(ac,bc).$  A monoid has right diameter 1 if and only if its diagonal right act is finitely generated.  The latter property had already been studied by Gallagher and Ru\v{s}kuc \cite{Gallagher:2006,Gallagher:2005,Gallagher}.  Some of their most interesting results concern certain natural infinite monoids of transformations and relations.  In particular, for any infinite set $X,$ the diagonal left and right acts are monogenic (i.e.\ generated by a single pair) for the monoids of all binary relations on $X,$ of all partial transformations on $X,$ and of all (full) transformations on $X$ \cite{Gallagher:2005}.

The above work regarding transformation monoids was part of the motivation for the article \cite{East}, which determines the left and right diameters of a large class of natural monoids of transformations and partitions.  For instance, it is shown that the monoid of all injective mappings on $X$ has right diameter 4, but its universal left congruence is not even finitely generated, and the monoid of all surjective mappings on $X$ has left diameter 4, while its universal right congruence is not finitely generated.  It is intriguing that all the diameters computed in \cite{East} are no greater than 4.

Some of the most natural and well-studied monoids arise as endomorphism monoids of mathematical structures, be they relational or algebraic.  Conversely, one way to classify complicated structures is via properties of their endomorphism monoids.  Chains, also known as linearly/totally ordered sets, are important and well-known relational structures; see \cite{Davey} for instance.  Of course, a chain $C$ may also be regarded as an algebraic structure, specifically a semigroup with the operation that takes the minimum (or, alternatively, the maximum) of two elements, and the order-preserving transformations of $C$ are precisely the semigroup endomorphisms of $C.$  We denote the endomorphism monoid of a chain $C$ by $\End(C)$\footnote{In the literature, $\End(C)$ is sometimes denoted by $\mathcal{O}_C$ or $\mathcal{OT}\!_C$.}.  Such monoids have received significant attention, particularly in the case of finite chains.  For instance, for a finite chain $C,$ a presentation for $\End(C)$ is provided in \cite{Aizen}, combinatorial properties of $\End(C)$ are investigated in \cite{Laradji}, the endomorphisms of $\End(C)$ are characterised in \cite{Fernandes}, and the representation theory of $\End(C)$ is considered in \cite{Stein}.
Endomorphism monoids of infinite chains can possess a rather complex structure, even in the case of the chain $\Q$ of rational numbers (under the natural ordering) \cite{McPhee}.  A particularly fruitful line of investigation into the behaviour of such monoids concerns the notion of regularity.  A map $\a\in\End(C)$ is regular if there exists $\b\in\End(C)$ such that $\a=\a\b\a,$ and $\End(C)$ is regular if all its elements are.  For a survey of regularity in $\End(C),$ see the article \cite{Mora}, which characterises the regular mappings of $\End(C).$  Notably, the monoid $\End(C)$ is regular if $C$ is isomorphic to a subchain of $\Z$ \cite{Kemp,Kim}, and hence if $C$ is finite.  On the other hand, the monoids $\End(\Q)$ and $\End(\R)$ are not regular \cite{Rung}.  We note that certain regular mappings will play a key role in the present paper.

In the context of diameter, Gallagher has shown that for any infinite chain $C,$ the diagonal left and right acts of $\End(C)$ are not finitely generated \cite[Theorems 4.5.4 and 4.5.5]{Gallagher}, and hence $\End(C)$ has neither left nor right diameter 1.  
Our main results provide complete classifications of the left and right diameters of endomorphism monoids of (infinite) chains:

\begin{mainthm}\label{thm:A}
For any infinite chain $C,$ the left diameter of $\End(C)$ is 2.
\end{mainthm}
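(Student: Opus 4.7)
The plan is to prove the upper bound $\leq 2$ by exhibiting a singleton generating set for the universal left congruence of $\End(C)$, all of whose derivations have length at most $2$. The matching lower bound $\geq 2$ is given by Gallagher's result cited in the introduction (the diagonal left act of $\End(C)$ is not finitely generated, so the left diameter cannot equal $1$).

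The key observation is that any constant map is a right zero of $\End(C)$. Concretely, fix any $z \in C$ and let $c_z \in \End(C)$ denote the constant map with value $z$. In the standard (diagrammatic) semigroup convention for composition, $(\alpha c_z)(t) = c_z(\alpha(t)) = z$ for all $t \in C$, so $\alpha c_z = c_z$ for every $\alpha \in \End(C)$. Taking the single generating pair $(c_z, \id_C)$ and left-closing by an arbitrary $\alpha$ therefore produces the pair $(\alpha c_z, \alpha\, \id_C) = (c_z, \alpha)$, placing $c_z$ just one step away from every element of $\End(C)$.

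Hence the left congruence generated by $(c_z, \id_C)$ is the universal one, and for any $a, b \in \End(C)$ we obtain the two-step derivation $a \to c_z \to b$. This shows that the left diameter with respect to this generating set is at most $2$, and combined with the lower bound from Gallagher we conclude that the left diameter of $\End(C)$ is exactly $2$.

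I do not foresee any substantive obstacle: the whole argument reduces to recognising the right-zero property of constants in $\End(C)$. The same observation accounts for the asymmetry with the right-diameter result announced in the abstract, since the analogous right-congruence closure of the pair $(c_z, \id_C)$ yields only the $\alpha$-dependent pairs $(c_{\alpha(z)}, \alpha)$, with no universal intermediate; that is precisely why the right diameter becomes genuinely sensitive to the order structure of $C$ (the quotient-of-$C \setminus \{z\}$ hypothesis), whereas the left diameter is uniformly $2$.
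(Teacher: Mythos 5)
Your proposal is correct and is essentially the paper's own argument: the lower bound comes from Gallagher's result that the diagonal left act is not finitely generated, and the upper bound comes from the right-zero property $\alpha c_z=c_z$, which links every element to the constant $c_z$ in one step and hence any two elements in two steps (the paper uses the generating set $\{1_C,c_z\}$, which amounts to the same pair $(1_C,c_z)$ you use). No substantive difference.
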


\begin{mainthm}\label{thm:B}
For any infinite chain $C,$ the right diameter of $\End(C)$ is either 2 or 3.  Furthermore, $\End(C)$ has right diameter 2 if and only if $C$ has an endpoint $z$ such that $C$ is a quotient of $C{\setminus}\{z\}.$
\end{mainthm}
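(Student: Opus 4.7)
The plan is to show the right diameter of $\End(C)$ is at most $3$ in general, then characterise exactly when it equals $2.$ The lower bound of $2$ is immediate from Gallagher's result that the diagonal right act of $\End(C)$ is not finitely generated for infinite $C,$ which rules out right diameter $1$ (and $0$ since $\End(C)$ is non-trivial).

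\emph{Upper bound of $3.$} Write $\o{c}\in\End(C)$ for the constant map with value $c.$ Fix $c<d$ in $C$ and take $U=\{(\id,\o{c}),(\o{c},\o{d})\}.$ The product $(\id,\o{c})\cdot\a=(\a,\o{\a(c)})$ is a single step, so every element lies one step from a constant. The product $(\o{c},\o{d})\cdot\tau=(\o{\tau(c)},\o{\tau(d)})$ yields, as $\tau$ varies, every pair of constants $(\o{e},\o{f})$ with $e\leq f$ (and reversing the generator supplies the case $e\geq f$), so any two constants are joined by a single step. Thus every $(\a,\b)$ admits the length-$3$ derivation $\a\to\o{\a(c)}\to\o{\b(c)}\to\b,$ whence $U$ generates the universal right congruence and the right diameter is at most $3.$

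\emph{``If'' direction.} Assume without loss of generality that $z=\min C$ and let $\phi\colon C\setminus\{z\}\to C$ be an order-preserving surjection. Extend $\phi$ to $\widetilde{\phi}\colon C\to C$ by $\widetilde{\phi}(z)=z$ and choose some $y^*\in\widetilde{\phi}^{-1}(z)\setminus\{z\},$ which exists by surjectivity of $\phi.$ Each non-empty fibre of the order-preserving $\widetilde{\phi}$ is a subchain of $C,$ and fibres over distinct points are disjoint and ordered by the induced order; hence there is an order-preserving section $\psi\colon C\to C$ of $\widetilde{\phi}$ with $\psi(z)=y^*.$ Then $\psi$ is an order-preserving injection with $z\notin\im\psi.$ Take $U=\{(\psi,\o{z})\}.$ For arbitrary $\a\in\End(C),$ define $\sig\in\End(C)$ by $\sig(z)=z$ and $\sig(y)=\a(\widetilde{\phi}(y))$ for $y\neq z.$ A short verification shows $\sig$ is order-preserving, $\psi\sig=\a$ (using $\widetilde{\phi}\psi=\id$), and $\o{z}\sig=\o{z}.$ Hence $(\a,\o{z})$ is a single step, giving the length-$2$ derivation $\a\to\o{z}\to\b$ for every pair.

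\emph{``Only if'' direction (the main obstacle).} Suppose $\End(C)$ has right diameter $2$ with a finite generating set $U=\{(u_i,v_i):1\leq i\leq n\};$ the task is to extract an endpoint $z$ of $C$ such that $C$ is a quotient of $C\setminus\{z\}.$ The plan is to examine length-$2$ derivations for the pairs $(\id,\o{c})$ as $c$ ranges over $C.$ Each factors as $\id\to\g_c\to\o{c},$ forcing $u_i\sig_c=\id$ (or its swap) for some index $i=i(c)$ at the first step, $v_j\tau_c=\o{c}$ (or its swap) for some $j=j(c)$ at the second, and $\g_c=v_i\sig_c=u_j\tau_c.$ A pigeonhole argument on the finitely many $(i,j)$-patterns yields an infinite subfamily of $c$ sharing a single pattern, and analysing the resulting simultaneous equations should identify a specific generator pair whose image and kernel structure forces an endpoint $z$ of $C$ and supplies an induced order-preserving surjection $C\setminus\{z\}\to C.$ This is the principal difficulty of the proof: one has to translate the combinatorics of length-$2$ derivations over an arbitrary finite generating set into the rigid structural conclusion on $C,$ which I expect will require a careful case analysis on which of $u_i,v_i$ is left-invertible, on the shapes of their images, and on whether the intermediate $\g_c$ happens to be a constant or not.
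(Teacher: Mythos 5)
Your lower bound, your upper bound of $3$, and your ``if'' direction are all correct and essentially coincide with the paper's arguments: the upper bound via derivations through constant maps is the paper's Proposition \ref{prop:chain,right}, and your section $\psi$ of the extended surjection $\widetilde\phi$ is exactly the right unit $\a$ with $z\a\neq z$ that the paper uses in Propositions \ref{prop:es} and \ref{prop:endpoint-shiftable}. However, the ``only if'' direction is a genuine gap: you have written a plan, not a proof, and you acknowledge as much. This direction is the bulk of the work in the paper (Propositions \ref{prop:noends} and \ref{prop:notES} together with Lemmas \ref{lem:CD}, \ref{lem:shifting} and \ref{lem:trim}), and your sketch is not clearly on a viable track. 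Pigeonholing over length-$2$ derivations from $\id$ to the constant maps $c_x$ gives you, for infinitely many $x$, equations of the form $1_C=\a\g_1$, $\b\g_1=\d\g_2$, $\e\g_2=c_x$ with a fixed pattern of generators, but it is not apparent how this alone forces $C$ to have an endpoint, let alone produces a surjection $C{\setminus}\{z\}\to C$; the constants carry too little information about how the intermediate maps act.

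What the paper actually does is choose the \emph{target} endomorphism $\th$ adaptively, after inspecting the finite generating set $U$. When $C$ has no endpoints, $\th$ is a two-valued step function whose values $u<p<q<v$ are placed below and above finite sets $X,Y$ extracted from $U$, and the unboundedness of images of right units (Corollary \ref{cor:1-1RU}) yields a contradiction from any length-$2$ derivation $1_C=\a\g_1,\ \b\g_1=\d\g_2,\ \e\g_2=\th$. When $C$ has an endpoint but is not endpoint-shiftable, one first needs the structural decomposition $C=M+D+N$ with $M,N$ finite and the fact that every one-sided unit fixes $M\cup N$ pointwise (Lemma \ref{lem:CD}, which itself uses the non-shiftability hypothesis), and then a much more delicate $\th$ built around the gaps of $C$ associated with the injective non-right-units in $U$, via the Dedekind--MacNeille completion and Lemmas \ref{lem:shifting} and \ref{lem:trim}. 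None of this machinery is present or replaced in your proposal, so the characterisation of when the right diameter equals $2$ remains unproved in the harder direction.
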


The paper is structured as follows.  In Section \ref{sec:diameter} we formally define the notion of diameter for monoids, and state some results we need from the literature.  In Section \ref{sec:chains} we establish the necessary preliminary material on chains and their endomorphisms.  Section \ref{sec:es} is concerned with the crucial condition of Theorem \ref{thm:B} (which determines the right diameter of the endomorphism monoid).  Finally, in Section \ref{sec:main} we prove Theorems \ref{thm:A} and \ref{thm:B}.  We note that we will be working in standard ZFC set theory; see \cite{Jech} for instance.

\section{Diameters of Monoids}\label{sec:diameter}

Throughout this section, $S$ denotes a monoid.  We denote the set of positive integers by $\N,$ and let $\N_0=\N\cup\{0\}.$


An equivalence relation $\rho$ on $S$ is a {\em right congruence} if $(a,b)\in\rho$ implies $(as,bs)\in\rho$ for all $s\in S$. 
For $U\subseteq S\times S,$ the {\em right congruence generated by} $U$ is the smallest right congruence on $S$ containing $U$; we denote this right congruence by $\l U\r.$

\begin{Lem}\label{lem:sequence}\cite[Lemma I.\,4.\,37]{kkm}
Let $S$ be a monoid, and let $U$ be a subset of $S\times S$.  For any $a,b\in S$ we have $(a,b)\in\l U\r$ if and only if there exists a sequence $$a=u_1s_1,\ v_1s_1=u_2s_2,\ \dots,\ v_ns_n=b$$ for some $n\in\N_0$, where $(u_i,v_i)\in U$ or $(v_i,u_i)\in U,$ and $s_i\in S,$ for each $i\in\{1,\dots,n\}.$
\end{Lem}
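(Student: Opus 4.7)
The plan is to prove both inclusions by exhibiting the set of pairs $(a,b)$ admitting such a sequence as the right congruence $\langle U\rangle$. Define
\[
\tau = \{(a,b)\in S\times S : \text{there exists a sequence as in the statement linking $a$ to $b$}\}.
\]
(We take $n=0$ to correspond to the case $a=b$, so that $\tau$ contains the diagonal.) The strategy is: (i) show $\tau$ is a right congruence containing $U$, so $\langle U\rangle\subseteq\tau$; (ii) show every right congruence containing $U$ must contain $\tau$, so $\tau\subseteq\langle U\rangle$.

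First I would verify that $\tau$ is an equivalence relation. Reflexivity follows from the $n=0$ case. Symmetry is built into the definition, since the condition ``$(u_i,v_i)\in U$ or $(v_i,u_i)\in U$'' is symmetric in the two components, so reversing the sequence $a=u_1s_1,v_1s_1=u_2s_2,\dots,v_ns_n=b$ witnesses $(b,a)\in\tau$ (after relabelling $u_i\leftrightarrow v_i$ and reversing the order of the steps). Transitivity is immediate by concatenating two sequences: a derivation from $a$ to $b$ followed by a derivation from $b$ to $c$ gives one from $a$ to $c$.

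Next I would check that $\tau$ is a right congruence and contains $U$. For right compatibility, given a derivation $a=u_1s_1,\ v_1s_1=u_2s_2,\ \dots,\ v_ns_n=b$ and an arbitrary $t\in S$, multiplying each equality on the right by $t$ yields $at=u_1(s_1t),\ v_1(s_1t)=u_2(s_2t),\ \dots,\ v_n(s_nt)=bt$, which is a valid derivation for $(at,bt)$; here the $s_it$ lie in $S$ because $S$ is a monoid. That $U\subseteq\tau$ is seen by taking $n=1$ with $s_1=1$: given $(u,v)\in U$, the one-step sequence $u=u\cdot 1,\ v\cdot 1=v$ shows $(u,v)\in\tau$. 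Hence $\langle U\rangle\subseteq\tau$.

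For the reverse inclusion, suppose $(a,b)\in\tau$ with witnessing sequence as above. Let $\rho$ be any right congruence on $S$ containing $U$. Then for each $i$, $(u_i,v_i)\in\rho$ (using symmetry of $\rho$ for the reversed case), and right compatibility gives $(u_is_i,v_is_i)\in\rho$. The equalities $v_is_i=u_{i+1}s_{i+1}$ together with transitivity of $\rho$ then chain these pairs into $(a,b)=(u_1s_1,v_ns_n)\in\rho$. Taking $\rho=\langle U\rangle$ gives $\tau\subseteq\langle U\rangle$, completing the proof. The argument is largely formal; the only mild subtlety is the bookkeeping for symmetry (ensuring the reversed sequence satisfies the same conditions), which is handled by the ``or $(v_i,u_i)\in U$'' clause in the statement.
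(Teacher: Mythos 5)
Your argument is correct and is the standard proof of this fact; the paper itself gives no proof, simply citing \cite[Lemma I.\,4.\,37]{kkm}, and the argument there is essentially the one you give (characterise $\langle U\rangle$ as the smallest right congruence containing $U$ and check that the relation defined by the existence of such sequences is exactly that). All the steps you flag as needing care --- the symmetry bookkeeping, the $n=0$ convention for reflexivity, and the right-translation of the $s_i$ --- are handled correctly.
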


A sequence of the form given in Lemma \ref{lem:sequence} is referred to as a {\em $U$-sequence from $a$ to $b$ of length $n$}.  
In the case that $n=0,$ we interpret the $U$-sequence as $a=b.$ 

The universal relation $\w_S=S\times S$ is certainly a right congruence on $S.$  When viewing this relation as a right congruence, we shall denote it by $\w_S^r$.  Consider a generating set $U$ of $\w_S^r$.  For any $a,b\in S,$ let $d_U^r(a,b)$ denote the least $n\in\N_0$ such that there is a $U$-sequence from $a$ to $b$ of length $n.$  It is easy to see that $d_U^r : S\times S\to\N_0$ is a metric on $S.$

\begin{Def}
Let $S$ be a monoid such that $\w_S^r$ is finitely generated.  For each finite generating set $U$ of $\w_S^r$, we call the diameter of the metric space $(S,d_U^r)$ the {\em right $U$-diameter} of $S$ and denote it by $D_r(S;U)$; that is, 
\[D_r(S;U)=\sup\{d_U^r(a,b): a,b\in S\}.\]
We define the {\em right diameter} of $S$ to be
$$D_r(S)=\min\{D_r(S;U): \w_S^r=\l U\r\text{ where $U$ is finite}\}.$$
\end{Def}

Note that if $U$ and $U'$ are two finite generating sets for $\w_S^r,$ then $D_r(S;U)$ is finite if and only if $D_r(S;U')$ is finite \cite[Lemma 2.5]{Dandan}.  

For convenience, we shall often abuse terminology by saying that $\w_S^r$ is generated by a subset $V$ of $S$ to mean that $\w_S^r$ is generated by $V\times V$, and also write $D_r(S;V)$ in place of $D_r(S;V\times V).$  We note that $D_r(S)$ is the minimum of all $D_r(S;V)$ over finite $V\subseteq S$ with $\w_S^r=\l V\times V\r.$


%
%
%

As mentioned in Section \ref{sec:intro}, there is an important connection between the notion of diameter and that of diagonal acts.  Recall that the {\em diagonal right $S$-act} is the set $S\times S$ with right action given by $(a,b)c=(ac,bc).$  It is {\em generated} by a set $U\subseteq S\times S$ if $S\times S=US:=\{us : u\in U, s\in S\},$ and is {\em finitely generated} if it is generated by a finite set.  

\begin{Prop}\label{prop:dp}\cite[Proposition 3.6]{Gould:23}
A non-trivial monoid $S$ has right diameter 1 if and only if the diagonal right $S$-act is finitely generated.
\end{Prop}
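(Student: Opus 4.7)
The plan is to translate the definition of right diameter $1$ directly into the statement that the diagonal right $S$-act is finitely generated, using Lemma \ref{lem:sequence} as the bridge. The key observation is that a $U$-sequence of length $1$ from $a$ to $b$ is exactly a factorisation $a = us$, $b = vs$ in $S$, that is, $(a,b) = (u,v)\cdot s$ in the diagonal right act, where $(u,v) \in U \cup U^{-1}$ (with $U^{-1} = \{(v,u) : (u,v)\in U\}$).

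For the forward direction, I would start with a finite generating set $U \subseteq S\times S$ of $\w_S^r$ realising $D_r(S;U) = 1$. Given any $(a,b)\in S\times S$, Lemma \ref{lem:sequence} produces a $U$-sequence of length at most $1$. If the length is $0$ then $a=b$ and $(a,b) = (1,1)\cdot a$; otherwise $(a,b) = (u,v)\cdot s$ for some $(u,v)\in U\cup U^{-1}$ and $s\in S$. Hence the finite set $V = U \cup U^{-1} \cup \{(1,1)\}$ generates the diagonal right $S$-act.

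For the converse, I would take a finite generating set $V \subseteq S\times S$ for the diagonal right $S$-act. Then every $(a,b)\in S\times S$ admits a factorisation $(a,b) = (u,v)\cdot s$ with $(u,v)\in V$ and $s\in S$; reading this as $a = us$, $vs = b$ exhibits a $V$-sequence of length at most $1$ from $a$ to $b$. This simultaneously shows that $V$ generates $\w_S^r$ and that $D_r(S;V)\leq 1$. Non-triviality of $S$ guarantees the existence of distinct elements, forcing $D_r(S)\geq 1$, so $D_r(S) = 1$.

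The main (and essentially only) obstacle is bookkeeping: Lemma \ref{lem:sequence} allows pairs in $U$ to be used in either orientation, whereas the diagonal act carries a one-sided action, necessitating the passage to the symmetric closure $U \cup U^{-1}$; and the diagonal pairs $(a,a)$ must be handled separately via $(1,1)$ when assembling a generating set for the diagonal act. No structural properties of $S$ beyond non-triviality are used, so the argument is formal and short.
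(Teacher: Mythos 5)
Your argument is correct. Note that the paper does not actually prove this proposition; it is imported verbatim from \cite[Proposition 3.6]{Gould:23}, so there is no in-paper proof to compare against. Your proof is the natural one: unwinding Lemma \ref{lem:sequence}, a $U$-sequence of length $1$ from $a$ to $b$ is precisely a factorisation $(a,b)=(u,v)s$ in the diagonal act with $(u,v)\in U\cup U^{-1}$, and you correctly handle the two points where the translation is not literal --- the need to symmetrise $U$ because generating pairs of a right congruence may be used in either orientation, and the need to adjoin $(1,1)$ to cover the diagonal pairs $(a,a)$, which arise from length-$0$ sequences and need not lie in $(U\cup U^{-1})S$. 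The converse direction and the use of non-triviality to force $D_r(S)\geq 1$ are likewise sound.
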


\begin{Rem}\label{rem:finite}
A monoid has right (or left) diameter 0 if and only if it is trivial.  Every finite, non-trivial monoid has right (and left) diameter 1.
\end{Rem}

\section{Preliminaries on Chains and their Endomorphisms}\label{sec:chains}

\subsection{Chains}

A {\em chain} is a set equipped with a {\em total order}, i.e.\ a partial order under which any two elements are comparable.\footnote{In the literature, there are a number of other names used for chains, including totally ordered sets, tosets, linearly ordered sets and losets.}  

Let $\bf{C}=(C,\leq)$ be a chain.  Where no confusion arises, we identify $\bf{C}$ with $C.$  A {\em subchain} of $C$ is a subset $D\subseteq C$ totally ordered under the restriction of $\leq$ to $D.$  A subchain of $C$ is called {\em proper} if its underlying set is not equal to $C.$  An {\em interval} of $C$ is a subchain $D$ of $C$ such that if $a\leq x\leq b$ with $a,b\in D$ then $x\in D$.
In Definition \ref{def:int} all the subsets defined are intervals.

\begin{Def}\label{def:int} 
Let $C$ be a chain, and let $a,b\in C$ with $a\leq b$.  We write
\[\begin{array}{rclrcl}
(a,b)&=&\{x\in C : a<x<b\},\quad&
[a,b)&=&\{x\in C : a\leq x<b\},\\
(a,b]&=&\{x\in C : a<x\leq b\},\quad&
[a,b]&=&\{x\in C : a\leq x\leq b\},\\
a^{\uparrow}&=&\{x\in C : a\leq x\},\quad&
a^{\downarrow}&=&\{x\in C : x\leq a\}.
\end{array}\]
\end{Def}

By $\bf{C}^*$ we denote the chain obtained from $\bf{C}$ by reversing the order; that is, $\bf{C}^*=(C,\geq).$  We let $\bf{N}=(\N,\leq)$, $\bf{Z}=(\Z,\leq)$, $\bf{Q}=(\Q,\leq)$ and $\bf{R}=(\R,\leq),$ where in each case $\leq$ is the natural order.  For $n\in\N$, we denote  the interval $[1,n]$ of $\bf{N}$ by $\bf{n}.$

Given a chain $a_1<a_2<\cdots$ of elements of $C,$ we will occasionally abuse notation by writing the set $\{a_1,a_2,\dots\}$ as $\{a_1<a_2<\cdots\}.$

For subsets $A$ and $B$ of $C,$ we write $A<B$ if $a<b$ for each $a\in A$ and $b\in B$; if $A=\{a\}$ we simply write this as $a<B.$  Similarly, we write $A<b,$ $A\leq B,$ etc.  Note that we assume $A<\es$ and $A>\es$ are (vacuously) true, and interpret $A<\es<B$ as $A<B.$
We say that \textit{$B$ covers $A$} if $A<B$ and there is no $x\in C$ with $A<x<B$. 

An element $b\in C$ is a {\em lower bound} of a subset $A\subseteq C$ if $b\leq A$.  There is at most one lower bound of $A$ belonging to $A$; if such an element exists, it is called the {\em minimum} of $A$ and is denoted by $\min(A).$  {\em Upper bounds} and {\em maximums} of subsets of $C$ are defined dually, and the maximum of $A\subseteq C,$ if it exists, is denoted by $\max(A).$  An element that is either a minimum or a maximum of $C$ is called an {\em endpoint}.


Given a chain $D$ disjoint from $C,$ we let $C+D$ denote the chain $(C\cup D,\leq)$ with order extending those of $C$ and $D$ and with $C<D$.  Hence $D$ covers $C$ in $C+D.$  We note that we will often abuse notation by writing $C+D$ for non-disjoint $C$ and $D,$ by which we mean $C+D'$ where $D'$ is a copy of $D$ disjoint from $C.$  

We call a subchain $A$ of $C$ an \textit{initial segment} if $C=A+D$ for some chain $D.$  We dually define \textit{terminal segments}. 
A \textit{cut} of $C$ is a pair $(A,B)$ such that $C=A+B$. 
A cut $(A,B)$ is said to be {\em proper} if both $A$ and $B$ are proper.  
A proper cut $(A,B)$ of $C$ is a \textit{gap} if $A$ has no maximum and $B$ has no minimum.  We note that if $(A,B)$ is a gap of $C,$ then $A$ contains a copy of $\bf{N},$ and $B$ contains a copy of $\bf{N}^*$, and hence $C$ contains a copy of $\bf{N}+\bf{N}^*$.
If $C$ contains no gaps then it is called \textit{(Dedekind) complete}. 

%

Clearly there is a one-to-one correspondence between the cuts of $C$ and the initial (or terminal) segments of $C.$  The set $\mathcal{D}(C)$ of all cuts of $C$ comes equipped with a total order $\leq_{DM}$ given by 
$$(A,B)\leq_{DM}(A',B')\,\Leftrightarrow\,A\subseteq A'\;(\Leftrightarrow  B'\subseteq B).$$
The chain $(\mathcal{D}(C),\leq_{DM})$ is complete and is known as the \textit{Dedekind-MacNeille completion of $C$} (with minimum $(\emptyset,C)$ and maximum $(C,\emptyset)$) \cite[Chapter 7]{Davey}. 

\subsection{Homomorphisms and one-sided units}

Let $C$ and $D$ be chains.  A {\em homomorphism} from $C$ to $D$ is a map $\th : C\to D$ that is {\em order preserving}, i.e.\ $x\th\leq y\th$ whenever $x\leq y.$  An {\em embedding} is an injective homomorphism, and an {\em isomorphism} is a bijective homomorphism.  We call $D$ a {\em quotient} of $C$ if there is a surjective homomorphism from $C$ onto $D.$  We note that every quotient of $C$ gives rise to a partition of $C$ into intervals, and vice versa. 

Given a homomorphism $\th : C\to D$ and a subchain $E\subseteq D,$ we denote by $E\th^{-1}$ the subchain $\{x\in C : x\th\in E\}$ of $C.$  For $y\in D,$ we abbreviate $\{y\}\th^{-1}$ to $y\th^{-1}$.

An {\em endomorphism} of $C$ is a homomorphism from $C$ to $C.$  The set $\End(C)$ of all endomorphisms of $C$ is a monoid under composition, whose identity $1_C$ is the identity map on $C.$  Injective elements of $\End(C)$ are called {\em self-embeddings} of $C.$

A map $\a\in\End(C)$ is {\em regular} if there exists some $\b\in\End(C)$ such that $\a=\a\b\a.$
The following result, due to Mora and Kemprasit, describes all regular endomorphisms of chains.

\begin{Thm}\label{thm:regular}\cite[Theorem 2.4]{Mora}
Let $C$ be a chain and $\a\in\End(C).$  Then $\a$ is regular if and only if the following conditions hold. 
\begin{enumerate}
\item[\emph{(i)}] If $\im\a$ has a lower bound in $C,$ then $\min(\im\a)$ exists. 
\item[\emph{(ii)}] If $\im\a$ has an upper bound in $C,$ then $\max(\im\a)$ exists.
\item[\emph{(iii)}] If $x\in C{\setminus}\!\im\a$ is neither an upper bound nor a lower bound of $\im\a,$ then at least one of $\max\{t\in\im\a : t<x\}$ and $\min\{t\in\im\a : t>x\}$ exists.
\end{enumerate} 
\end{Thm}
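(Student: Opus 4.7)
The plan is to prove necessity and sufficiency separately; both directions hinge on the observation that any quasi-inverse $\b$ of $\a$ must restrict to an order-preserving section of $\a$ on $\im\a$, so the task reduces to building (or extracting) such a section together with an order-preserving retraction $C\to\im\a$.

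For necessity, suppose $\a=\a\b\a$. Then $y\b\a=y$ for every $y\in\im\a$, so $\b|_{\im\a}$ is a section of $\a$. I would extract each of (i)--(iii) by applying $\b\a$ to a well-chosen point. If $\ell\in C$ is a lower bound of $\im\a$, then $\ell\b\a\in\im\a$ and for any $y\in\im\a$ we have $\ell\leq y$, hence $\ell\b\a\leq y\b\a=y$; thus $\ell\b\a=\min(\im\a)$, proving (i), and (ii) is dual. For (iii), if $x\in C\setminus\im\a$ lies strictly between elements of $\im\a$, then $x\b\a\in\im\a$ differs from $x$; in the subcase $x\b\a<x$, any $y\in\im\a$ with $y<x$ satisfies $y=y\b\a\leq x\b\a$, so $x\b\a=\max\{t\in\im\a:t<x\}$, and the opposite subcase is dual.

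For sufficiency, assume (i)--(iii) and build $\b=\pi\sig$, where $\sig\colon\im\a\to C$ is an order-preserving section of $\a$ (any choice of $y\sig\in y\a^{-1}$ works, since the fibres of $\a$ are convex intervals stacked in the order of their labels), and $\pi\colon C\to\im\a$ is defined by $x\pi=x$ for $x\in\im\a$, and for $x\notin\im\a$ by $x\pi=\max\{t\in\im\a:t<x\}$ if this exists and otherwise $x\pi=\min\{t\in\im\a:t>x\}$. Conditions (i), (ii), (iii) ensure that, according as $x$ is a lower bound, an upper bound, or neither for $\im\a$, at least one of these extrema is attained. A direct computation then gives $\a\pi\sig\a=\a\sig\a=\a$, using that $\pi$ fixes $\im\a$ and that $\sig\a$ acts as the identity on $\im\a$.

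The main obstacle will be verifying that $\pi$ is order-preserving, as its definition switches between a ``max below'' and a ``min above'' rule at different points. I would fix $x_1<x_2$ in $C$, observe that $L(x):=\{t\in\im\a:t<x\}$ and $U(x):=\{t\in\im\a:t>x\}$ satisfy $L(x_1)\subseteq L(x_2)$ and $U(x_2)\subseteq U(x_1)$, and case-split on which rule defines $x_i\pi$. The delicate case is when the two rules disagree (say, $\max L(x_1)$ fails to exist while $\max L(x_2)$ does); then $L(x_1)\subsetneq L(x_2)$, and any $t\in L(x_2)\setminus L(x_1)$ satisfies $x_1<t\leq\max L(x_2)$, forcing $\min U(x_1)\leq\max L(x_2)$. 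Comparisons involving points already in $\im\a$ follow immediately from the set inclusions above, completing the verification.
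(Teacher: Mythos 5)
The paper offers no proof of this statement: it is imported verbatim from Mora and Kemprasit as \cite[Theorem 2.4]{Mora}, so there is no internal argument to compare yours against. Your proof is correct and self-contained, and is presumably essentially the standard one. In the necessity direction, the key observation that $\b\a$ fixes $\im\a$ pointwise is right, and evaluating $\b\a$ at a lower bound, an upper bound, or an interior non-image point does produce the required extremum in each of (i)--(iii). For sufficiency, the factorisation $\b=\pi\sig$ works: any section $\sig$ of $\a$ is automatically order-preserving because the fibres of $\a$ are intervals ordered consistently with their images, and $\a\pi\sig\a=\a$ holds by construction since $\pi$ fixes $\im\a$. You correctly isolate the only genuinely delicate point, namely that $\pi$ is order-preserving, and your handling of the awkward case --- $x_1\pi=\min U(x_1)$ because $\max L(x_1)$ fails to exist while $x_2\pi=\max L(x_2)$ exists --- is sound: there $L(x_1)\subsetneq L(x_2)$ (whether or not $L(x_1)=\es$), and any $t\in L(x_2){\setminus}L(x_1)$ satisfies $t\in\im\a$ and $t\geq x_1$, hence $t>x_1$ as $x_1\notin\im\a$, giving $\min U(x_1)\leq t\leq\max L(x_2)$. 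The one step worth stating more explicitly is the well-definedness of $\pi$ at a point $x\notin\im\a$ that is a lower (respectively upper) bound of $\im\a$: there $L(x)=\es$ (respectively $U(x)=\es$), so condition (iii) is unavailable and one must invoke (i) (respectively (ii)) to obtain $\min U(x)=\min(\im\a)$ (respectively $\max L(x)=\max(\im\a)$); you do note this in passing. I see no gaps.
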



Given $x\in C,$ the {\em constant map} on $x$ is the map $c_x : C\to C$ given by $yc_x=x$ for all $y\in C.$  It is clear that each $c_x$ is a regular, indeed an idempotent, endomorphism of $C.$ 

A map $\a\in\End(C)$ is a {\em right unit} if there exists $\b\in\End(C)$ such that $\a\b=1_C$; the element $\b$ is called a {\em right inverse} of $\a,$ and $\a$ is said to be {\em right invertible}.  {\em Left units} and {\em left inverses} are defined dually.  Clearly $\a$ is a right (resp.\ left) unit of $\End(C)$ if and only if it is a left (resp.\ right) inverse of some $\b\in\End(C).$
It is also clear that right/left units are regular.  The following lemma describes the left units of $\End(C).$

\begin{Lem}\label{lem:leftunits}
Left units of $\End(C)$ are precisely the surjective endomorphisms of $C.$
\end{Lem}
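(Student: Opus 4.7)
The plan is to treat the two directions of the biconditional separately. The forward direction (left unit implies surjective) is essentially immediate from the definition, while the converse (surjective implies left unit) requires a brief construction using the Axiom of Choice.

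For the forward direction, I would suppose $\a$ is a left unit and pick $\b\in\End(C)$ satisfying $\b\a=1_C$. Then for every $x\in C$ we have $x=x(\b\a)=(x\b)\a$, which exhibits $x$ as an element of $\im\a$; hence $\a$ is surjective.

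For the converse, the key observation is that if $\a\in\End(C)$ is surjective, then the fibres $\{x\a^{-1}:x\in C\}$ are non-empty and are strictly ordered in the following sense: if $x_1<x_2$ in $C$, then $y_1<y_2$ for every $y_1\in x_1\a^{-1}$ and $y_2\in x_2\a^{-1}$, since otherwise $y_1\geq y_2$ would give $x_1=y_1\a\geq y_2\a=x_2$, a contradiction. Using the Axiom of Choice, I would select, for each $x\in C$, some element $x\b\in x\a^{-1}$. By construction $(x\b)\a=x$ for all $x\in C$, so $\b\a=1_C$; and the fibre-ordering observation ensures that the resulting map $\b$ is order-preserving regardless of how the representatives were chosen. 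Hence $\b\in\End(C),$ and $\a$ is a left unit.

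I do not expect any real obstacle: the argument is short and elementary. The only point worth flagging is that the order-preserving property of $\b$ is automatic from the geometry of the fibres, rather than requiring any coordinated choice of representatives.
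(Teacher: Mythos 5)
Your proof is correct and follows essentially the same route as the paper's: the forward direction is immediate from $\b\a=1_C$, and the converse chooses a representative from each fibre $x\a^{-1}$ to build a right inverse $\b$ for $\a$. The only difference is that you spell out why $\b$ is order-preserving (the fibres of a surjective order-preserving map are strictly ordered), a point the paper dismisses as clear.
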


\begin{proof}
Clearly left units of $\End(C)$ are surjective.  For the converse, let $\b$ be a surjective endomorphism of $C.$  For each $x\in C$ choose $y_x\in x\b^{-1}$, and define $\a : C\to C$ by $x\a=y_x$.  It is clear that $\a\in\End(C),$ and for any $x\in C$ we have $x\a\b=y_x\b=x,$ so that $\a\b=1_C$.  Thus $\b$ is a left unit of $\End(C).$
\end{proof}

Right units of $\End(C)$ have a more complicated description.  They are certainly self-embeddings, but the converse need not hold.  Using the above description of regular endomorphisms of $C,$ we quickly determine which self-embeddings are right invertible. 

\begin{Cor}\label{cor:1-1RU} 
Let $C$ be a chain and $\a\in\End(C).$  Then the following statements are equivalent.
\begin{enumerate}[leftmargin=*]
\item[\emph{(1)}] $\a$ is a right unit of $\End(C).$
\item[\emph{(2)}] $\a$ is both injective and regular. 
\item[\emph{(3)}] $\a$ is injective and the following conditions hold.
\begin{enumerate}
\item[\emph{(a)}] If $C$ has no minimum, then $\im\a$ has no lower bound in $C.$
\item[\emph{(b)}] If $C$ has no maximum, then $\im\a$ has no upper bound in $C.$
\item[\emph{(c)}] If $x\in C{\setminus}\!\im\a$ is neither an upper bound nor a lower bound of $\im\a,$ then at least one of $\max\{t\in\im\a : t<x\}$ and $\min\{t\in\im\a : t>x\}$ exists.
\end{enumerate}
\end{enumerate} 
\end{Cor}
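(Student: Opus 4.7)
The plan is to establish $(1) \Leftrightarrow (2)$ by a short direct argument, and then to deduce $(2) \Leftrightarrow (3)$ from Theorem \ref{thm:regular} by observing that, under the injectivity of $\a$, conditions (i) and (ii) of Theorem \ref{thm:regular} simplify precisely to (a) and (b), while (iii) and (c) are literally the same.

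For $(1) \Rightarrow (2)$, suppose $\a\b = 1_C$. Then $\a$ is injective (as $\a\b$ is), and writing $\a = \a\b\a$ shows $\a$ is regular. For $(2) \Rightarrow (1)$, pick $\b\in\End(C)$ with $\a\b\a = \a$. For each $x\in C$ we then have $(x\a\b)\a = x\a$, and injectivity of $\a$ gives $x\a\b = x$; hence $\a\b = 1_C$ and $\a$ is a right unit. This step is standard and presents no obstacle.

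For $(2) \Leftrightarrow (3)$ I would keep the assumption that $\a$ is injective and check that, in that setting, $(\text{i}) \Leftrightarrow (\text{a})$ and $(\text{ii}) \Leftrightarrow (\text{b})$; the equivalence $(\text{iii}) \Leftrightarrow (\text{c})$ is immediate since these conditions are identical. By symmetry (replacing $C$ with $C^*$), it suffices to do the first of these. For $(\text{i}) \Rightarrow (\text{a})$, suppose $C$ has no minimum, and assume for contradiction that $\im\a$ has a lower bound in $C$. By (i), $\min(\im\a)$ exists, say equal to $x\a$ for some $x\in C$. Since $C$ has no minimum, choose $y<x$; since $\a$ is order-preserving and injective, $y\a < x\a$, contradicting minimality. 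For $(\text{a}) \Rightarrow (\text{i})$, suppose $\im\a$ has a lower bound in $C$; by the contrapositive of (a), $C$ has a minimum $m_0$, and then $m_0\a \leq x\a$ for all $x\in C$, so $m_0\a = \min(\im\a)$. Theorem \ref{thm:regular} then yields $(2) \Leftrightarrow (3)$.

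There is no real obstacle here: the argument is essentially a bookkeeping reduction of Theorem \ref{thm:regular} using the extra hypothesis of injectivity, combined with the one-line monoid-theoretic fact that an injective regular element in any monoid is a right unit. The only point worth stating carefully is the use of injectivity to promote order-preservation to strict order-preservation, which is what makes condition (i) collapse to the simpler statement (a).
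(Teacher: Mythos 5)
Your proposal is correct and follows essentially the same route as the paper: a direct one-line argument for $(1)\Leftrightarrow(2)$ using injectivity to cancel $\a$ in $\a=\a\b\a$, and a reduction of $(2)\Leftrightarrow(3)$ to Theorem \ref{thm:regular} by noting that injectivity makes $\min(\im\a)$ exist precisely when $C$ has a minimum (and dually), with (iii) and (c) identical. The only difference is that you spell out the bookkeeping for $(\mathrm{i})\Leftrightarrow(\mathrm{a})$ in slightly more detail than the paper does.
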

 
\begin{proof}
(1)$\Leftrightarrow$(2).  We have already observed that right units of $\End(C)$ are injective and regular.  For the converse, suppose $\a\in\End(C)$ is injective and regular.  Then $\a=\a\b\a$ for some $\b\in S.$  For any $x\in C,$ we have $x\a=(x\a\b)\a,$ and hence $x=x\a\b$ by injectivity of $\a.$  Thus $\a\b=1_C$, as required.  

(2)$\Leftrightarrow$(3).  Let $\a$ be injective.  We need to show that $\a$ is regular if and only if the conditions (a)-(c) hold.  By Theorem \ref{thm:regular}, regularity of $\a$ is equivalent to the conditions (i)-(iii) of that theorem holding.  Since $\a\in S$ is injective, it is clear that $\min(\im\a)$ exists if and only if $C$ has a minimum, and hence (i) is equivalent to (a).  Similarly, (ii) is equivalent to (b).  Finally, (iii) is identical to (c).
\end{proof}

\begin{Rem}\label{rem:notRU} 
If $\a$ is a self-embedding of a chain $C$ that fails condition (3)(c) of Corollary \ref{cor:1-1RU} with witness $x\in C{\setminus}\!\im\a,$ then ($x^{\downarrow}\a^{-1},x^{\uparrow}\a^{-1}$) is a gap of $C.$
\end{Rem}

The following technical lemma will be crucial in our arguments to determine the exact value of the right diameter of $\End(C).$  

\begin{Lem}\label{lem:rightunitset}
Let $C$ be a chain, let $x$ be a fixed element of $C,$ let $\a$ be a right unit of $\End(C),$ and let $Q=\{\b\in\End(C) : \a\b=1_C\}$ be the set of right inverses of $\a.$  Then, for any $\b\in Q$ we have $$x\b\a=
\begin{cases}
\max(\im\a\cap x^{\downarrow})&\text{if }x\b\a\leq x\\
\min(\im\a\cap x^{\uparrow})&\text{if }x\b\a\geq x.
\end{cases}$$
Consequently, the set $\{x\b : \b\in Q\}$ has at most two elements.  
\end{Lem}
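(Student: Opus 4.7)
The plan is to exploit the fact, given by Corollary \ref{cor:1-1RU}, that $\alpha$ is an injective endomorphism and therefore an order-embedding: an injective order-preserving map between chains reflects strict inequalities as well as preserving them. Fixing $\beta \in Q$, the key move is to transport inequalities between $\im\alpha$ and $C$ using the relation $\alpha\beta = 1_C$.

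Concretely, since $C$ is a chain, $x\beta\alpha$ is comparable with $x$, so at least one of the two clauses applies. Consider the case $x\beta\alpha \leq x$. Then $x\beta\alpha$ itself belongs to $\im\alpha \cap x^{\downarrow}$, so that set is non-empty. To check that $x\beta\alpha$ is its maximum, I would take an arbitrary $z\alpha \in \im\alpha \cap x^{\downarrow}$ and pull the inequality $z\alpha \leq x$ back by applying the order-preserving map $\beta$: using $\alpha\beta = 1_C$ this gives $z = z\alpha\beta \leq x\beta$, and reapplying the order-preserving $\alpha$ yields $z\alpha \leq x\beta\alpha$, as required. The dual argument, transporting $z\alpha \geq x$ to $z \geq x\beta$ to $z\alpha \geq x\beta\alpha$, handles the clause $x\beta\alpha \geq x$.

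For the final assertion, trichotomy in the chain $C$ forces every $\beta \in Q$ into (at least) one of the two clauses, so $x\beta\alpha$ can take only the values $\max(\im\alpha \cap x^{\downarrow})$ and $\min(\im\alpha \cap x^{\uparrow})$ (which coincide when $x \in \im\alpha$). Because $\alpha$ is injective, $x\beta$ is uniquely determined by $x\beta\alpha$, and so $\{x\beta : \beta \in Q\}$ has at most two elements.

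There is no real obstacle here; the whole lemma is a short exercise in moving inequalities across the identity $\alpha\beta = 1_C$ and invoking injectivity of $\alpha$. The only point to keep an eye on is making sure both the $\leq$ and the $\geq$ clauses are derived symmetrically, and noting that the existence of the asserted maximum or minimum is automatic from the argument rather than something requiring a separate completeness hypothesis on $C$.
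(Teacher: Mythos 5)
Your argument is correct and is essentially the paper's own proof: both show that every element of $\im\a\cap x^{\downarrow}$ is bounded above by $x\b\a$ by pushing the inequality $z\a\leq x$ through $\b$ and then $\a$ using $\a\b=1_C$, handle the other clause dually, and deduce the final claim from the injectivity of $\a$ together with the fact that the two candidate values do not depend on $\b$. No issues.
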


\begin{proof}
Let $Y_1=\im\a\cap x^{\downarrow}$ and $Y_2=\im\a\cap x^{\uparrow}$.  Then $\im\a=Y_1\cup Y_2$ with $Y_1\leq Y_2$.  Consider $\b\in Q.$  For any $y\in Y_1$, we have $y\leq x$ and $y=u\a$ for some $u\in C,$ so that
$$y=u\a=u\a\b\a=y\b\a\leq x\b\a.$$
Thus $Y_1\leq x\b\a.$  Hence, if $x\b\a\leq x$ (so that $x\b\a\in Y_1$) then $x\b\a=\max(Y_1).$  Similarly, if $x\b\a\geq x$ then $x\b\a=\min(Y_2).$

Now, since the sets $Y_1$ and $Y_2$ do not depend on any $\b\in Q,$ and $\a$ is injective, it follows that $|\{x\b : \b\in Q\}|\leq2.$ 
\end{proof}

\section{Endpoint-shiftability}\label{sec:es}

In this section we study the condition appearing in Theorem \ref{thm:B}.  We begin by providing equivalent formulations of this condition.

\begin{Prop}\label{prop:es}
For a chain $C$ with an endpoint $z,$ the following are equivalent:
\begin{enumerate}
\item[\emph{(1)}] $C$ is a quotient of $C{\setminus}\{z\}$;
\item[\emph{(2)}] there exists a left unit (surjective map) $\b\in\End(C)$ such that $z\b^{-1}\neq\{z\}$;
\item[\emph{(3)}] there exists a right unit $\a\in\End(C)$ such that $z\a\neq z$.
\end{enumerate}
\end{Prop}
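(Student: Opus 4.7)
The plan is to prove the cyclic chain $(1)\Rightarrow(2)\Rightarrow(3)\Rightarrow(1)$, after fixing WLOG that $z=\min(C)$ (the case $z=\max(C)$ is order-dual). The key observation underpinning everything is that any surjective $\beta\in\End(C)$ must fix $z$: since $\beta$ is order-preserving and $z$ is the unique minimum of $C=\im\beta$, the element $z\beta$ has to be the minimum of $\im\beta$, hence equal to $z$. This rules out the a priori alternative reading of the condition $z\beta^{-1}\ne\{z\}$ in (2), forcing it to mean that some $x\ne z$ satisfies $x\beta=z$.

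For $(1)\Rightarrow(2)$, given a surjection $\phi:C{\setminus}\{z\}\to C$, I would define $\beta\in\End(C)$ by $z\beta=z$ and $x\beta=x\phi$ for $x\ne z$. Order-preservation is automatic because $z$ is the minimum of $C$, and surjectivity transfers from $\phi$; moreover the surjectivity of $\phi$ provides some $x\ne z$ with $x\phi=z$, giving $z\beta^{-1}\supsetneq\{z\}$. Conversely, for $(2)\Rightarrow(1)$, the key observation gives $z\beta=z$, and the hypothesis $z\beta^{-1}\ne\{z\}$ supplies an $x\ne z$ with $x\beta=z$; then $\beta\restriction_{C\setminus\{z\}}$ is a homomorphism, and is still surjective because any preimage of a point $y\ne z$ under $\beta$ must itself lie in $C{\setminus}\{z\}$ (since $z\beta=z\ne y$), while $z$ itself is hit by $x$.

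The equivalence of (2) and (3) is the easiest part, coming directly from the identity $\alpha\beta=1_C$ together with Lemma~\ref{lem:leftunits}. For $(3)\Rightarrow(2)$, if $\alpha\beta=1_C$ and $z\alpha\ne z$, then $\beta$ is surjective (since $1_C$ is) and $w:=z\alpha$ is a non-$z$ element of $z\beta^{-1}$. For $(2)\Rightarrow(3)$, I would mimic the construction in the proof of Lemma~\ref{lem:leftunits}, choosing a representative $y_x\in x\beta^{-1}$ for each $x$ and defining $x\alpha=y_x$, but forcing the choice $y_z$ to be some pre-selected element of $z\beta^{-1}{\setminus}\{z\}$ (which exists by assumption). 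The only thing needing checking is that this $\alpha$ is order-preserving: if $x_1<x_2$ but $y_{x_1}\ge y_{x_2}$, then applying $\beta$ gives $x_1\ge x_2$, a contradiction.

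I do not anticipate any serious obstacle; the main conceptual point is the key observation about surjective endomorphisms fixing endpoints, which both rules out the trivial reading of (2) and makes all the constructions line up. The argument is essentially bookkeeping once one fixes the convention $z=\min(C)$.
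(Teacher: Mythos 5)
Your proof is correct and takes essentially the same route as the paper's: the same three constructions appear in both (extending a surjection $C{\setminus}\{z\}\to C$ by fixing $z$, choosing preimage representatives with $y_z\neq z$ as in Lemma~\ref{lem:leftunits}, and passing to a right inverse of $\a$). The only differences are cosmetic --- you prove a couple of redundant implications and make explicit the observation that surjective endomorphisms fix endpoints, which the paper leaves implicit.
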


\begin{proof}
(1)$\Rightarrow$(2).  By assumption, there exists a surjective homomorphism $\th : C{\setminus}\{z\}\to C.$  Extend $\th$ to a map $\b : C\to C$ by setting $z\b=z.$  Then $\b$ is a surjective endomorphism of $C,$ and hence a left unit of $\End(C)$ by Lemma \ref{lem:leftunits}, and $z\b^{-1}\neq\{z\}.$

(2)$\Rightarrow$(3).  
As in the proof of Lemma \ref{lem:leftunits}, we define a right unit $\a\in\End(C)$ by $x\a=y_x\in x\b^{-1}$ ($x\in C$), but choose $y_z\neq z.$

(3)$\Rightarrow$(1).  Let $\b$ be any right inverse of $\a.$  Then $\b$ is a surjective endomorphism of $C.$  Since $(z\a)\b=z$ and $z\a\neq z,$ it follows that $\b$ restricts to a surjective homomorphism $C{\setminus}\{z\}\to C,$ and hence $C$ is a quotient of $C{\setminus}\{z\}.$
\end{proof}

\begin{Def}
We say that a chain $C$ is {\em endpoint-shiftable} if it has an endpoint $z\in C$ and satisfies any (and hence all) of the three equivalent conditions of Proposition \ref{prop:es}.  If, in fact, $C$ is endpoint-shiftable with respect to a minimum (resp.\ maximum), then we call it {\em min-shiftable} (resp.\ {\em max-shiftable}).
\end{Def}


%

The next few results concern min-shiftability; of course, there are obvious duals for max-shiftability.  First, we provide several equivalent characterisations for a chain to be min-shiftable.


\begin{Prop}\label{prop:ms}
For a chain $C$ with a minimum, the following are equivalent:
\begin{enumerate}
\item[\emph{(1)}] $C$ is min-shiftable;
\item[\emph{(2)}] $C^*$ is max-shiftable;
\item[\emph{(3)}] $C$ is a quotient of one of its proper terminal segments;
\item[\emph{(4)}] for every chain $D,$ the chain $C+D$ is min-shiftable;
\item[\emph{(5)}] for each $n\in\N,$ the chain $\bf{n}+C$ is a quotient of $C$;
\item[\emph{(6)}] for some $n\in\N,$ the chain $\bf{n}+C$ is a quotient of $C$;
\item[\emph{(7)}] for each $n\in\N,$ the chain $\bf{n}+C$ is min-shiftable;
\item[\emph{(8)}] for some $n\in\N,$ the chain $\bf{n}+C$ is min-shiftable.
\end{enumerate}
\end{Prop}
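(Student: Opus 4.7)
The plan is to route every equivalence through (1): I will establish (1)$\Leftrightarrow$(2) by dualising, handle (1)$\Leftrightarrow$(3) and (1)$\Leftrightarrow$(4) separately by straightforward extension arguments, and then close two short cycles (1)$\Rightarrow$(5)$\Rightarrow$(6)$\Rightarrow$(1) and (1)$\Rightarrow$(7)$\Rightarrow$(8)$\Rightarrow$(1). Both cycles ultimately pivot on condition (3), via a preimage-of-$\bf{n}$ analysis that produces a surjection from a proper terminal segment of $C$ onto $C$. The equivalence (1)$\Leftrightarrow$(2) is immediate from the definitions, since ``$C^*$ is max-shiftable'' unwinds verbatim to ``$C$ is min-shiftable''.

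For (1)$\Leftrightarrow$(3), the forward direction is trivial because $C\setminus\{z\}$ is itself a proper terminal segment of $C$. For the converse, given a surjective homomorphism $\phi\colon T\to C$ from a proper terminal segment $T,$ I will extend $\phi$ to a map $\psi\colon C\setminus\{z\}\to C$ by setting $\psi(x)=z$ for each $x\in C\setminus T$ with $x\neq z$; order-preservation follows from $(C\setminus T)\setminus\{z\}<T$ and $z=\min C,$ and surjectivity is inherited from $\phi.$ For (1)$\Leftrightarrow$(4), the reverse direction is trivial (take $D=\es$), and the forward direction extends a min-shift surjection on $C\setminus\{z\}$ by the identity on $D$ to obtain a min-shift surjection on $(C+D)\setminus\{z\}.$

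For (1)$\Rightarrow$(5) I will induct on $n$ using the decomposition $\bf{n}+C=\bf{n-1}+(\bf{1}+C).$ The base case $n=1$ uses the map $C\to\bf{1}+C$ sending $z$ to the new minimum and agreeing with the min-shift surjection on $C\setminus\{z\}.$ The inductive step extends the base-case surjection $C\to\bf{1}+C$ by the identity on the prefix $\bf{n-1},$ yielding a surjection $\bf{n-1}+C\to\bf{n}+C,$ which composes with the inductive surjection $C\to\bf{n-1}+C.$ The implication (5)$\Rightarrow$(6) is trivial. The key step (6)$\Rightarrow$(1) goes via (3): given a surjection $\phi\colon C\to\bf{n}+C,$ the set $I=\phi^{-1}(\bf{n})$ is a nonempty proper initial segment of $C$ (nonempty by surjectivity onto $\bf{n},$ proper because the $C$-part of the codomain must also be hit), so $C\setminus I$ is a proper terminal segment and $\phi|_{C\setminus I}\colon C\setminus I\to C$ is a surjection.

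The second cycle runs in a parallel fashion. For (1)$\Rightarrow$(7), I take the surjection $C\to\bf{n}+C$ provided by (5) and extend it to a surjection $\bf{n-1}+C\to\bf{n}+C$ by collapsing the prefix $\bf{n-1}$ to the minimum of $\bf{n}+C$ (with the convention $\bf{0}+C=C$ when $n=1$); this witnesses min-shiftability of $\bf{n}+C.$ The step (7)$\Rightarrow$(8) is trivial. For (8)$\Rightarrow$(1), given a surjection $\phi\colon\bf{n-1}+C\to\bf{n}+C,$ its preimage $\phi^{-1}(\bf{n})$ is an initial segment of $\bf{n-1}+C$ that cannot be contained in $\bf{n-1}$ by the cardinality count $|\bf{n-1}|<|\bf{n}|,$ hence meets $C$ in a nonempty initial segment $I$; once again $\phi|_{C\setminus I}\colon C\setminus I\to C$ is a surjection, yielding (3). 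I expect the only genuine subtlety to be the preimage/cardinality argument underlying (6)$\Rightarrow$(1) and (8)$\Rightarrow$(1); the remaining implications are routine extension-by-constant or extension-by-identity constructions.
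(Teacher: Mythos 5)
Your proposal is correct, and its overall architecture matches the paper's: every condition is routed through (1) and (3), and the implications out of (6) and (8) rest on the same key observation, namely that the preimage of the initial segment $\bf{n}$ under a surjective homomorphism onto $\bf{n}+C$ must spill past the finite prefix of the domain, so that the preimage of the $C$-part is a proper terminal segment of $C$ mapping onto $C$. The one genuine divergence is $(1)\Rightarrow(5)$: the paper takes a single surjection $\b\in\End(C)$ with $z\b^{-1}\neq\{z\}$, forms $\g=\b^n$, asserts $|z\g^{-1}|\geq n+1$, and then redistributes the fibre $z\g^{-1}$ over $\bf{n}+\{z\}$; you instead induct on $n$ via the decomposition $\bf{n}+C=\bf{n-1}+(\bf{1}+C)$, composing surjections $C\to\bf{n-1}+C\to\bf{n}+C$. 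Your induction avoids having to verify the fibre-size estimate (which the paper leaves as ``straightforward to show''), at the cost of a slightly longer chain of compositions; both are sound. The remaining structural difference --- you close $(6)\Rightarrow(1)$ and $(8)\Rightarrow(1)$ directly, whereas the paper obtains $(5)\Rightarrow(7)$ and $(6)\Rightarrow(8)$ from $(3)\Rightarrow(1)$ applied to $\bf{n}+C$ and then proves only $(8)\Rightarrow(3)$ --- is immaterial, since the same preimage argument does the work in either arrangement, and your cardinality justification for why $\phi^{-1}(\bf{n})$ must meet $C$ is a clean substitute for the paper's step-by-step verification that $i\b<i$ for $i\in\{2,\dots,n\}$.
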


\begin{proof}
Clearly $(1)\Leftrightarrow(2),$ $(5)\Rightarrow(6)$ and $(7)\Rightarrow(8).$  Let $z=\min(C).$

$(1)\Leftrightarrow(3).$  The forward implication follows immediately from the definition of being min-shiftable together with Proposition \ref{prop:es}(1).  For the converse, suppose that $C$ is a quotient of a proper terminal segment $T$ of $C.$  Let $\phi : T\to C$ be a surjective homomorphism, and extend $\phi$ to a surjective homomorphism $\th : C\to C$ by letting $(C{\setminus}T)\th=\{z\}.$  Then $z\th^{-1}\neq\{z\}$ as $T\neq C.$  Thus $C$ is min-shiftable.


$(1)\Leftrightarrow(4).$  Suppose that $C$ is min-shiftable.  Then there exists a surjective homomorphism $\phi : C{\setminus}\{z\}\to C.$  For any chain $D,$ clearly $z=\min(C+D),$ and $\phi$ can be extended to a surjective homomorphism $\th : (C+D){\setminus}\{z\}\to C+D$ by letting $x\th=x$ for all $x\in D,$ so that $C+D$ is min-shiftable.  For the converse, put $D=\emptyset.$

$(1)\Rightarrow(5).$  Since $C$ is min-shiftable, there exists a surjective map $\b\in\End(C)$ such that $z\b^{-1}\neq\{z\}.$  It is straightforward to show that $\g:=\b^n : C\to C$ is a surjective homomorphism with $|z\g^{-1}|\geq n+1.$  Choose a surjective homomorphism $\phi : z\g^{-1}\to\bf{n}+\{z\},$ and define a map
$$\th : C\to\bf{n}+C,\ x\mapsto
\begin{cases}
x\phi&\text{if }x\in z\g^{-1}\\
x\g&\text{ otherwise.}
\end{cases}$$
Clearly $\th$ is a surjective homomorphism, as required.


$(5)\Rightarrow(7)$ and $(6)\Rightarrow(8)$ follow from the already-established implication $(1)\Rightarrow(3)$.

$(8)\Rightarrow(3).$  There exists a surjective homomorphism $\b : \bf{n}{\setminus}\{1\}+C\to\bf{n}+C.$  It follows from the surjectivity of $\b$ that $i\b<i$ for each $i\in\{2,\dots,n\},$ and hence $n\b^{-1}\subseteq C,$ which implies that $z\b\in\bf{n}.$  It now follows that $C\b^{-1}$ is a proper terminal segment of $C,$ and that $C$ is a quotient of $C\b^{-1}$.
\end{proof}

The next result provides another means of constructing min-shiftable chains.  First, recall that the Cartesian product $C\times D$ of two chains $C$ and $D$ is a chain under the {\em lexicographical order}, given by $(x,y)\leq(x',y')$ if and only $x<x'$ or ($x=x'$ and $y\leq y'$).  Note that $(z,w)=\min(C\times D)$ if and only if $z=\min(C)$ and $w=\min(D).$

\begin{Prop}\label{prop:product}
Let $C$ and $D$ be chains with minima.  If $C$ is isomorphic to one of its proper terminal segments, or if $D$ is min-shiftable, then $C\times D$ is min-shiftable.
\end{Prop}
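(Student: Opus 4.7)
The plan is to split the argument into two cases according to which of the two hypotheses holds, and in each case to exhibit a witness to min-shiftability using one of the equivalent reformulations from Propositions \ref{prop:es} and \ref{prop:ms}. Write $z=\min C$ and $w=\min D$, so that $(z,w)$ is the minimum of $C\times D$ under the lexicographic order.

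First suppose $C$ is isomorphic to a proper terminal segment $T$, via an isomorphism $\phi:C\to T$, and write $C=S+T$ with $S\neq\emptyset$. In the lexicographic order on $C\times D$ we have $(s,d)<(t,d')$ whenever $s\in S$ and $t\in T$, so $C\times D=(S\times D)+(T\times D)$; since $S$ is non-empty and $D$ (having a minimum) is non-empty, this exhibits $T\times D$ as a proper terminal segment of $C\times D$. The map $\phi^{-1}\times\id_D:T\times D\to C\times D$ is an isomorphism, and in particular a surjective homomorphism, so $C\times D$ is a quotient of one of its proper terminal segments; Proposition \ref{prop:ms}(3) then gives min-shiftability.

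Now suppose instead that $D$ is min-shiftable.  By Proposition \ref{prop:es}(2) applied to $D$, there is a surjective $\b\in\End(D)$ and some $d_0\in D\setminus\{w\}$ with $d_0\b=w$. Define $\th:C\times D\to C\times D$ by $(c,d)\th=(c,d\b)$. I would check that $\th$ is order-preserving (using that $\b$ preserves order, together with the fact that in the lex comparison the second coordinate is consulted only when the first coordinates agree) and that it is surjective (from surjectivity of $\b$).  Since $(z,d_0)\th=(z,w)$ with $(z,d_0)\neq(z,w)$, the set $(z,w)\th^{-1}$ is not $\{(z,w)\}$, and Proposition \ref{prop:es}(2), read in the other direction, yields that $C\times D$ is min-shiftable.

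The whole argument is short once the appropriate characterisation of min-shiftability is applied in each case; the one point worth flagging is why the first hypothesis needs $C$ to be genuinely \emph{isomorphic} to a proper terminal segment rather than merely a quotient of one (that is, min-shiftable). An endomorphism $\g\in\End(C)$ witnessing min-shiftability of $C$ need not be injective, and the natural product map $\g\times\id_D$ then fails order-preservation on $C\times D$: if $c<c'$ with $c\g=c'\g$ but $d>d'$, the lex order reverses. Thus the direct analogue of the second-case argument breaks down when $C$ is only min-shiftable, which is exactly why the stronger hypothesis of isomorphism to a proper terminal segment is assumed on the $C$-side.
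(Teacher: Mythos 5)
Your proof is correct. The first case is essentially the paper's own argument: you exhibit $T\times D$ as a proper terminal segment of $C\times D$ isomorphic (hence a quotient, via $\phi^{-1}\times\mathrm{id}_D$) to $C\times D$, and Proposition \ref{prop:ms}(3) finishes. In the second case you take a slightly different route: the paper observes $C\times D\cong D+\bigl((C{\setminus}\{z\})\times D\bigr)$ and then simply invokes Proposition \ref{prop:ms}(4), whereas you construct the witness directly, taking a surjective $\b\in\End(D)$ with $w\b^{-1}\neq\{w\}$ and applying it fibrewise via $(c,d)\mapsto(c,d\b)$; the order-preservation and surjectivity checks you sketch are exactly right, and $(z,d_0)\in(z,w)\th^{-1}$ gives condition (2) of Proposition \ref{prop:es} for $C\times D$. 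The paper's version is shorter because Proposition \ref{prop:ms}(4) already packages the extension-by-identity argument, and its map only disturbs the initial copy $\{z\}\times D$, while yours collapses every fibre; both are equally routine to verify. Your closing remark correctly pinpoints why the fibrewise trick cannot be transplanted to the $C$-coordinate when $C$ is merely min-shiftable (a non-injective $\g$ ruins lexicographic order-preservation), which is indeed the reason the hypothesis on the $C$-side is the stronger one.
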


\begin{proof}
Let $z=\min(C).$  Since $C\times D\cong D+(C{\setminus}\{z\}\times D),$ if $D$ is min-shiftable then, by Proposition \ref{prop:ms}, so is $C\times D.$  Suppose now that $C$ is isomorphic to a proper terminal segment $T$ of $C,$ and let $\phi : C\to T$ be an isomorphism.  Then $C\times D$ is isomorphic to its proper terminal segment $T\times D$ via $(x,y)\mapsto(x\phi,y).$  Thus, by Proposition \ref{prop:ms}, the chain $C\times D$ is min-shiftable.
\end{proof}

We now present a number of natural examples of min-shiftable chains.  Together with Propositions \ref{prop:ms} and \ref{prop:product}, these examples generate a large class of endpoint-shiftable chains.  First, recall that a chain $C$ is {\em scattered} if $\bf{Q}$ does {\em not} embed into $C$; otherwise, $C$ is {\em non-scattered}.

\begin{Prop}\label{prop:msexamples}~
\begin{enumerate}[leftmargin=*]
\item[\emph{(1)}] Every infinite well-ordered chain is min-shiftable.
\item[\emph{(2)}] If $C$ is a countable, non-scattered chain with a minimum, then $C$ is min-shiftable.
\item[\emph{(3)}] If $D$ is a subchain of $\bf{1}+\bf{Z}+\bf{1}$ with both a minimum and a maximum, then $D+\bf{R}$ is min-shiftable.
\end{enumerate}
In particular, the following chains are min-shiftable: $$\bf{N},\quad\bf{1}+\bf{Q},\quad\bf{1}+\bf{N}^*+\bf{Q},\quad\bf{1}+\bf{Z}+\bf{Q},\quad\bf{1}+\bf{R},\quad\bf{1}+\bf{N}^*+\bf{R},\quad\bf{1}+\bf{Z}+\bf{1}+\bf{R}.$$
\end{Prop}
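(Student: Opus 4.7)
The plan is to treat the three parts of the proposition separately.

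For part~(1), the key is the ordinal arithmetic identity $1+\alpha=\alpha$, valid for every infinite ordinal $\alpha$.  If $C$ is an infinite well-ordered chain, viewed as an ordinal $\alpha\geq\w$, then $C\setminus\{\min C\}$ has order type $\alpha'$ satisfying $1+\alpha'=\alpha$, and left-cancellation of ordinal addition gives $\alpha'=\alpha$.  Thus the identification $C\setminus\{\min C\}\to C$ is an order-isomorphism and in particular an order-preserving surjection, so Proposition~\ref{prop:es}(1) yields min-shiftability.

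For part~(3), I would construct an order-preserving surjection $\psi:\bf{R}\to D+\bf{R}$ from the proper terminal segment $\bf{R}$ of $D+\bf{R}$ and then apply Proposition~\ref{prop:ms}(3).  Split $\bf{R}=(-\infty,0]\cup(0,\infty)$: send $(0,\infty)$ to $\bf{R}$ via any order-isomorphism (e.g.\ $x\mapsto \log x$), and surject $(-\infty,0]$ onto $D$ by partitioning $(-\infty,0]$ into $|D|$-many non-empty intervals ordered as $D$.  Since $D$ is a subchain of $\bf{1}+\bf{Z}+\bf{1}$ with both minimum and maximum, $D$ has one of finitely many shapes (each a concatenation of pieces $\bf{1}$, finite chains, $\bf{N}$, $\bf{N}^*$ or $\bf{Z}$), and in every case the required partition of $(-\infty,0]$ is straightforward using Dedekind-completeness of $\bf{R}$.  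For instance, when $D=\bf{1}+\bf{Z}+\bf{1}$, a strictly increasing bi-infinite sequence $(a_n)_{n\in\Z}$ in $(-1,0)$ with $\inf a_n=-1$ and $\sup a_n=0$ gives the partition into $(-\infty,-1]$, the intervals $(a_n,a_{n+1}]$ for $n\in\Z$, and $\{0\}$.

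Part~(2) is the main challenge.  The key auxiliary fact is: for every countable chain $C'$ there is an order-preserving surjection $\bf{Q}\to C'$.  Indeed, the Cartesian product $C'\times\bf{Q}$ under lexicographical order is a countable dense linear order without endpoints, hence $\cong\bf{Q}$ by Cantor's theorem, and composing this isomorphism with the first-coordinate projection gives the required surjection.  Now let $C$ be countable non-scattered with minimum $z$.  Pick a subchain $D\subseteq C$ with $D\cong\bf{Q}$ (so $z\notin D$) and an element $a\in D$, so that $T=[a,\infty)_C$ is a proper terminal segment of $C$.  To show $T$ surjects onto $C$---which by Proposition~\ref{prop:ms}(3) suffices---first apply the auxiliary fact to obtain an order-preserving surjection $\psi:\{a\}\cup(D\cap(a,\infty))\to C$ with $a\mapsto z$ (the domain is isomorphic to $\bf{1}+\bf{Q}$), and then extend $\psi$ over $T\setminus(\{a\}\cup D)$ by assigning to each $t\in T\setminus D$ the element $c\in C$ determined by the position of $t$ relative to the partition $\{\psi^{-1}(c):c\in C\}$ of $\{a\}\cup(D\cap(a,\infty))$.

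The main obstacle in~(2) is ensuring this extension is well defined and order-preserving in every case; the delicate point is to avoid situations where the cut of $D\cap(a,\infty)$ induced by some $t\in T\setminus D$ corresponds, via $\psi$, to a \emph{gap} in $C$ (a cut of $C$ with neither a maximum on the lower side nor a minimum on the upper side), since then $\phi(t)$ has no natural value in $C$.  This can be handled by constructing $\psi$ through a Cantor-style back-and-forth using enumerations of $C$ and of $D\cap(a,\infty)$, so that every block $\psi^{-1}(c)$ is arranged to ``span'' each cut of $D\cap(a,\infty)$ that would otherwise land in such a gap.  The \emph{in particular} claims then follow immediately: $\bf{N}$ is min-shiftable by~(1); the chains $\bf{1}+\bf{Q}$, $\bf{1}+\bf{N}^*+\bf{Q}$, and $\bf{1}+\bf{Z}+\bf{Q}$ are countable non-scattered with a minimum and so are covered by~(2); and $\bf{1}+\bf{R}$, $\bf{1}+\bf{N}^*+\bf{R}$, and $\bf{1}+\bf{Z}+\bf{1}+\bf{R}$ have the form $D+\bf{R}$ with $D\in\{\bf{1},\bf{1}+\bf{N}^*,\bf{1}+\bf{Z}+\bf{1}\}$ a subchain of $\bf{1}+\bf{Z}+\bf{1}$ with both endpoints, so are covered by~(3).
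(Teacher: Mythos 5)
Your parts (1) and (3) are essentially correct. Part (1) is the paper's own argument in ordinal-arithmetic clothing: the paper writes $C\cong\bf{N}+D$ and shifts the initial copy of $\bf{N}$, while you derive the same isomorphism $C\cong C{\setminus}\{\min C\}$ from $1+\a=\a$ and left cancellation. Part (3) is a mild variant: the paper surjects the terminal segment $D'+\bf{R}$ onto $D+\bf{R}$ via $\bf{R}\cong\bf{R}+\bf{1}+\bf{R}$, whereas you surject the terminal segment $\bf{R}$ directly by partitioning $(-\infty,0]$ into intervals ordered as $D$; both work. Your auxiliary fact for (2) --- every countable chain is a quotient of $\bf{Q}$, proved via $C'\times\bf{Q}\cong\bf{Q}$ --- is also correct, and is one of the two results of \cite{Camerlo} that the paper invokes.

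Part (2), however, has a genuine gap, and it is not the one you flag. Take $C=\bf{1}+\bf{Q}+\bf{N}$. Any copy $D$ of $\bf{Q}$ in $C$ lies entirely inside the $\bf{Q}$-summand (it can meet the scattered tail $\bf{N}$ in at most one point, which would then have to be its maximum), so the whole tail $\bf{N}$ lies strictly above $D'=\{a\}\cup(D\cap(a,\infty))$. Since $\psi:D'\to C$ is surjective, $\psi(D')=C$, which has no upper bound in $C$; hence for $t$ in the tail there is no $c\in C$ with $c\geq d\psi$ for all $d\in D'$, and no order-preserving extension of $\psi$ to $T=[a,\infty)$ exists --- for \emph{any} choice of $\psi$. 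So the obstruction is not only the internal gaps of $C$ that your back-and-forth is designed to dodge, but the scattered terminal segment sitting above every embedded copy of $\bf{Q}$; no rearrangement of the blocks $c\psi^{-1}$ can repair this, because the failure is forced by surjectivity of $\psi$ alone. The paper avoids it by using the second result of \cite{Camerlo}: $C=C_0+C'+C_1$ with $C_0,C_1$ scattered and $\bf{Q}$ a quotient of the \emph{interval} $C'$, not merely embedded in it. Then $C_0+C'$ is a quotient of $\bf{Q}$, hence of $C'$, and the surjection is extended to all of $C$ by fixing $C_1$ pointwise (the equivalence of (1) and (4) in Proposition \ref{prop:ms}) --- it never has to reach $C_1$ from inside the non-scattered part. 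You need this stronger structural input, or something equivalent, for (2) to go through.
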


\begin{proof}
(1) An infinite well-ordered chain is isomorphic to $\bf{N}+D$ for some well-ordered chain $D$; this is folklore, but follows from \cite[Theorem 2.8]{Jech}.  
Moreover, $\bf{N}+D$ is isomorphic to $\bf{N}{\setminus}\{1\}+D$ via the map given by $n\mapsto n+1$ ($n\in\N$) and $x\mapsto x$ ($x\in D$).

(2) By \cite[Proposition 17]{Camerlo}, we may write $C$ as $C=C_0+C'+C_1$ where $C_0$ and $C_1$ are scattered and $\bf{Q}$ is a quotient of $C'$.  Note that $C_0$ is non-empty, with $\min(C)=\min(C_0),$ while $C_1$ is possibly empty.  By \cite[Proposition 16(1)]{Camerlo}, every countable chain is a quotient of $\bf{Q}$; in particular, $C_0+C'$ is a quotient of $\bf{Q}.$  It follows that $C_0+C'$ is a quotient of its proper terminal segment $C'$, and is hence min-shiftable by Proposition \ref{prop:ms}.  Thus, by Proposition \ref{prop:ms} again, $C=(C_0+C')+C_1$ is min-shiftable.

(3) By definition, $D=\bf{1}+D'$ for some terminal segment $D'$.  It is easy to see that $D$ is a quotient of $\bf{R}.$  It follows that $D$ is also a quotient of $D'+\bf{R}+\bf{1},$ since we can map $D'$, if non-empty, to $\min(D),$ and $\bf{1}$ to $\max(D).$  It can also easily be shown that $\bf{R}\cong\bf{R}+\bf{1}+\bf{R}.$  Consequently, $D+\bf{R}$ is a quotient of $D'+\bf{R}\cong(D'+\bf{R}+\bf{1})+\bf{R}.$  
\end{proof}

One can easily find natural examples of chains that are {\em not} endpoint-shiftable.  Certainly chains without endpoints are not endpoint-shiftable.
Further examples include chains with endpoints that do not embed into any of their proper initial or terminal segments.  Such chains certainly include all finite chains, and also the chains $\bf{1}+\bf{Z},$ $\bf{Z}+\bf{1}$ and $\bf{1}+\bf{Z}+\bf{1}.$


Our next example is a chain with a minimum (but no maximum) that is not min-shiftable but has self-embeddings that move the minimum.

\begin{Ex}\label{ex:1+Z+R}
Let $C=\bf{1}+\bf{Z}+\bf{R},$ and let $z=\min(C).$  Certainly $C$ has self-embeddings that move $z,$ since $C$ may be embedded into $\bf{R}.$  Suppose for a contradiction that $C$ is min-shiftable, and let $\a\in\End(C)$ be a right unit with $z\a>z.$  
If there were some $x\in\bf{Z}$ with $x\a\in\bf{Z},$ then $[z,x]\a=[z\a,x\a]$ would be a finite interval of $\bf{Z},$ but $[z,x]$ is infinite, so this would contradict that $\a$ is injective.  Thus $\bf{Z}\a\cap\bf{Z}=\emptyset,$ and hence $\bf{Z}\a\subseteq\bf{R}$. 
Let 
$$A=\{x\in\bf{R} : x\leq y\text{ for some }y\in\bf{Z}\a\}\quad\text{ and }\quad B=\{x\in\bf{R} : x>\bf{Z}\a\}.$$
Clearly $\bf{R}=A+B,$ with $\bf{Z}\a\subseteq A$ and $\bf{R}\a\subseteq B,$ and $A$ has no maximum as $\bf{Z}$ has no maximum.  Since $\bf{R}$ is complete, the proper cut $(A,B)$ is not a gap, and hence $B$ has a minimum, say $x.$  We cannot have $x\in\bf{R}\a,$ since $\bf{R}$ has no minimum, so $x\notin\im\a.$  But then $\{t\in\im\a : t<x\}=(\bf{1}+\bf{Z})\a$ has no maximum, and $\{t\in\im\a : t>x\}=\bf{R}\a$ has no minimum, contradicting Corollary \ref{cor:1-1RU}.
\end{Ex}

\vspace{0.2em}
\begin{Rem}~
\begin{enumerate}[leftmargin=*]
\item By Proposition \ref{prop:msexamples}, the chain $\bf{1}+\bf{Z}+\bf{Q}$ is min-shiftable.  The key reason that the trick used in Example \ref{ex:1+Z+R} does not work for $\bf{1}+\bf{Z}+\bf{Q}$ is that $\bf{Q}$ is not complete.  
\item Clearly $\bf{1}+\bf{Z}+\bf{R}$ is a non-scattered chain with a minimum.  Thus, Example \ref{ex:1+Z+R} demonstrates the necessity of the chain $C$ of Proposition \ref{prop:msexamples}(2) being countable.
\item By a similar argument as that of Example \ref{ex:1+Z+R}, the chain $\bf{1}+\bf{Z}+\bf{R}+\bf{Z}+\bf{1}$ is not endpoint-shiftable but has self-embeddings that move both endpoints.
\end{enumerate}
\end{Rem}

\section{Proofs of the Main Results}\label{sec:main}


In this section we prove the main results of the paper.
For what follows, recall that the constants maps $c_x$ of the chain $C$ belong to $\End(C),$ and note that for each $\th\in\End(C)$ we have $\th c_x=c_x$ and $c_x\th=c_{x\th}$.  We now quickly prove Theorem \ref{thm:A}.


\begin{proof}[Proof of Theorem \ref{thm:A}]
Let $S=\End(C).$  By \cite[Theorem 4.5.4]{Gallagher}, the diagonal left $S$-act is not finitely generated.  Thus, by the dual of Proposition \ref{prop:dp}, the left diameter of $S$ is not 1.  
Now pick any $x\in C,$ and let $U=\{1_C,c_x\}.$  For any $\th,\phi\in S,$ we have a $U$-sequence
$$\th=\th1_C,\ \th c_x=\phi c_x,\ \phi1_C=\phi$$
of length 2, so that $D_l(S;U)\leq2.$  Thus $D_l(S)=2.$
\end{proof}

The proof of Theorem \ref{thm:B} is considerably more complicated.  We begin by showing that the right diameter of $\End(C)$ is either 2 or 3.

\begin{Prop}\label{prop:chain,right}
If $C$ is an infinite chain, then $D_r(\End(C))$ is either 2 or 3.
\end{Prop}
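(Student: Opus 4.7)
The plan is to prove $D_r(\End(C)) \geq 2$ and $D_r(\End(C)) \leq 3$ separately.

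For the lower bound, I would cite \cite[Theorem 4.5.5]{Gallagher} (mentioned in the introduction), which asserts that for an infinite chain $C$ the diagonal right $\End(C)$-act is not finitely generated. Proposition \ref{prop:dp} then rules out $D_r(\End(C)) = 1$, while Remark \ref{rem:finite} rules out $D_r(\End(C)) = 0$ since $\End(C)$ is non-trivial.

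For the upper bound, fix two elements $x, y \in C$ with $x < y$ (possible since $C$ is infinite) and consider the finite set $U = \{1_C, c_x, c_y\} \subseteq \End(C)$. The key observation is that any two constants $c_a, c_b \in \End(C)$ are related by a single $U$-step. Indeed, assuming $a \leq b$ (the case $a > b$ being handled symmetrically by using the reverse pair $(c_y, c_x)$), the map $s \in \End(C)$ defined by $ts = a$ for $t \leq x$ and $ts = b$ for $t > x$ is order-preserving and satisfies $xs = a$ and $ys = b$, so $c_a = c_x s$ and $c_b = c_y s$ are related via the generator pair $(c_x, c_y)$.

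Given arbitrary $\th, \phi \in \End(C)$, I would then exhibit the $U$-sequence
$$\th = 1_C \cdot \th, \quad c_x \cdot \th = c_{x\th} = c_x \cdot s, \quad c_y \cdot s = c_{x\phi} = c_x \cdot \phi, \quad 1_C \cdot \phi = \phi,$$
of length $3$, where $s$ is the interpolating map from the previous paragraph built for $a = x\th$ and $b = x\phi$. This yields $D_r(\End(C); U) \leq 3$ and hence $D_r(\End(C)) \leq 3$. No serious obstacle is anticipated: the argument rests essentially on the comparability of $x\th$ and $x\phi$ in the chain $C$, which provides the flexibility needed to insert a one-step bridge between the two intermediate constants. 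The harder question of precisely when the diameter drops from $3$ to $2$ is the content of Theorem \ref{thm:B}, whose proof will occupy the remainder of the section.
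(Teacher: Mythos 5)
Your proposal is correct and follows essentially the same route as the paper: the lower bound via Gallagher's result on diagonal acts together with Proposition \ref{prop:dp}, and the upper bound via the generating set $U=\{1_C,c_x,c_y\}$ with an interpolating step map (your $s$ is exactly the paper's $\g$) bridging the constants $c_{x\th}$ and $c_{x\phi}$ in one step. The only cosmetic difference is that the paper also records a short self-contained alternative argument (via Lemma \ref{lem:rightunitset}) for why the right diameter cannot be $1$, but this is optional.
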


\begin{proof}
Let $S=\End(C).$  By \cite[Theorem 4.5.5]{Gallagher} and Proposition \ref{prop:dp}, the monoid $S$ does not have right diameter 1.  In fact, this can also be deduced quickly from Lemma \ref{lem:rightunitset}, as follows.  We give the proof since it is short and introduces methods that will be used later.

Suppose for a contradiction that $D_r(S;U)=1$ for some finite set $U\subseteq S.$  Fix any $x\in C.$  For each $\a,\b\in U$ where $\a$ is a right unit, let $J_{\a,\b}=\{(x\b)\g : \g\in S, \a\g=1_C\},$ and let $J$ be the union of all the $J_{\a,\b}$.  The set $J$ is finite since $U$ is finite and, by Lemma \ref{lem:rightunitset}, each $J_{\a,\b}$ is finite.  Now pick any $y\in C{\setminus}J.$  Then, by assumption, there exist $\a,\b\in U$ and $\g\in S$ such that $1_C=\a\g$ and $c_y=\b\g.$  But then $y=(x\b)\g\in J,$ a contradiction.  Thus $S$ has does not have right diameter 1.

We now prove that $D_r(S)\leq3.$  Fix any $x,y\in C$ with $x<y,$ and let $U=\{1_C,c_x,c_y\}.$  We claim that $\w_S^r=\l U\r$ with $D_r(S;U)\leq3.$  Consider $\th,\phi\in S.$  We may assume without loss of generality that $x\th\leq x\phi.$  
Define a map 
$$\g : C\to C,\ w\mapsto
\begin{cases}
x\th&\text{if }w\leq x\\
x\phi&\text{if }w>x.
\end{cases}$$
It is easy to see that $\g\in S.$  Moreover, we have a $U$-sequence
$$\th=1_C\th,\ c_x\th=c_x\g,\ c_y\g=c_x\phi,\ 1_C\phi=\phi$$
of length 3, as required.
\end{proof}

Next, we prove that the endomorphism monoid of an endpoint-shiftable chain has right diameter 2.

\begin{Prop}\label{prop:endpoint-shiftable}
If $C$ is an endpoint-shiftable chain, then $D_r(\End(C))=2.$  
\end{Prop}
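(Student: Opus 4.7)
The plan is to combine Proposition~\ref{prop:chain,right}, which already gives $D_r(\End(C))\in\{2,3\}$, with an explicit construction of a two-element generating set $V\subseteq S:=\End(C)$ for $\w_S^r$ satisfying $D_r(S;V)\leq 2$. Since $\End(C)=\End(C^*)$ as monoids and (by Proposition~\ref{prop:ms}) $C$ is min-shiftable iff $C^*$ is max-shiftable, we may assume without loss of generality that $C$ is min-shiftable. Set $z:=\min(C)$, use Proposition~\ref{prop:es}(3) to fix a right unit $\a\in S$ with $w:=z\a>z$, and let $\b$ be any right inverse of $\a$, so that $\a\b=1_C$. The candidate generating set is $V=\{\a,c_z\}$.

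The goal will then be to show that for arbitrary $\th,\phi\in S$ there is a $V$-sequence
\[
\th=\a s_\th,\quad c_z s_\th = c_z s_\phi,\quad \a s_\phi = \phi
\]
of length $2$, using the two pairs $(\a,c_z),(c_z,\a)\in V\times V$. For each $\psi\in\{\th,\phi\}$ I will define $s_\psi:C\to C$ by the two-piece formula
\[
s_\psi(y)=\begin{cases}z&\text{if }y<w,\\ y\b\psi&\text{if }y\geq w.\end{cases}
\]
Three straightforward checks then suffice: (i) $s_\psi\in S$, which needs monotonicity on each piece (immediate, since $\b\psi$ is order-preserving) together with the junction inequality $z=\min(C)\leq y\b\psi$; (ii) $\a s_\psi=\psi$, since for every $x\in C$ we have $x\a\geq w$ and therefore $s_\psi(x\a)=x\a\b\psi=x\psi$; and (iii) $s_\psi(z)=z$, which is immediate from $z<w$. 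The last property gives $c_z s_\psi = c_{s_\psi(z)}=c_z$ for both $\psi=\th$ and $\psi=\phi$, so the middle equation of the $V$-sequence holds automatically.

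The main conceptual point—and what I see as the crux of the argument—is why endpoint-shiftability is essential here. The hypothesis $z\a>z$ forces $z\notin\im\a$, so the interval $[z,w)$ is a non-empty buffer lying strictly below $\im\a$, into which we may freely redefine $s_\psi$ to take the value $z$ without disturbing the equation $\a s_\psi=\psi$. This decouples $s_\psi(z)$ from $\psi$ and allows us to force $s_\th(z)=s_\phi(z)=z$ regardless of whether $z\th=z\phi$, which is exactly the obstruction that compelled the length-$3$ construction in the proof of Proposition~\ref{prop:chain,right}. If no right unit moving $z$ were available, no such buffer would exist and the two-step sequence would collapse; verifying that this shortcut really is valid on the full chain (in particular on the possibly rich subset $C\setminus\im\a$ beyond the buffer, where we simply ride along with $\b\psi$) is the only place where a little care is needed.
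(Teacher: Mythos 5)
Your proof is correct and is essentially the paper's own argument: the same generating set $\{\a,c_z\}$ with $\a$ a right unit moving the minimum, and the same connecting map (your $s_\psi$ is exactly the paper's $\g$, sending everything below $z\a$ to $z$ and acting as $\b\psi$ above). The only cosmetic difference is that the paper phrases it as a length-$1$ sequence from each $\th$ to $c_z$ and concatenates, whereas you write out the length-$2$ sequence between $\th$ and $\phi$ directly.
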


\begin{proof}
By symmetry we may assume that $C$ is min-shiftable. 
Let $S=\End(C).$  By Proposition \ref{prop:chain,right} we have $D_r(S)\geq2.$  Now let $z$ be the minimum of $C,$ and let $\a\in S$ be a right unit such that $z\a>z.$  Letting $U=\{\a,c_z\},$ we claim that $\w_S^r=\l U\r$ and $D_r(S;U)=2.$  So, consider $\th\in S.$  Since $\a$ is a right unit of $S,$ there exists $\b\in S$ such that $\a\b=1_C$.  Define a map 
$$\g : C\to C, x\mapsto 
\begin{cases}
x\b\th&\text{ if }x\geq z\a,\\
z&\text{ otherwise.}
\end{cases}$$
It is straightforward to show that $\g\in S.$  For each $x\in C$ we have $x\a\g=x\a\b\th=x\th,$ so 
$$\th=\a\g,\ c_z\g=c_z$$
is a $U$-sequence of length 1 from $\th$ to $c_z$.  Since $\th$ is arbitrary, it follows that $D_r(S;U)\leq2.$  Thus $D_r(S)=2.$
\end{proof}

\begin{Rem}
It is perhaps interesting that in each of the proofs of Theorem \ref{thm:A} and Propositions \ref{prop:chain,right} and \ref{prop:endpoint-shiftable}, to obtain the upper bound on the diameter, only one `skeleton' is required to connect any pair $\th,\phi\in S=\End(C),$ where the {\em skeleton} of a $U$-sequence ($U\subseteq S$) refers to the sequence of elements of $U$ appearing in the $U$-sequence.  For instance, the skeleton in the proof of Proposition \ref{prop:chain,right} is $(1_C,c_x,c_x,c_y,c_x,1_C)$.
\end{Rem}


 
We now turn to prove the converse of Proposition \ref{prop:endpoint-shiftable}.  Our first step is to show that in order for the endomorphism monoid of a chain to have right diameter 2, it is necessary that the chain have an endpoint.

\begin{Prop}\label{prop:noends}
If $C$ is a chain with no endpoints, then $D_r(\End(C))=3.$
\end{Prop}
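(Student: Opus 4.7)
The approach is a proof by contradiction: using Proposition \ref{prop:chain,right}, it suffices to exclude $D_r(\End(C)) = 2$. So I would suppose that there is a finite $U \subseteq S := \End(C)$ with $\omega_S^r = \langle U \times U \rangle$ and $D_r(S;U) \leq 2$, and aim to construct a pair in $S \times S$ that cannot be connected by a $U$-sequence of length at most $2$.

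The natural target is the pair $(1_C, c_y)$ with $y \in C$ to be chosen adversarially. A length-$2$ $U$-sequence from $1_C$ to $c_y$ has the form
\[
1_C = u_1 s_1,\quad v_1 s_1 = u_2 s_2,\quad v_2 s_2 = c_y,
\]
and the first equation forces $u_1 \in U$ to be a right unit of $S$ and $s_1$ to be a right inverse of $u_1$. By Lemma \ref{lem:rightunitset}, for each fixed $w \in C$ the value $w s_1$ then lies in a set of size at most two as $s_1$ ranges. For each of the finitely many skeletons $\sigma = (u_1, v_1, u_2, v_2) \in U^4$ I would compute the set $Y_\sigma \subseteq C$ of reachable $y$, splitting into cases according to whether $v_1, u_2, v_2$ are constants, right units, or neither, and according to how their images are ordered relative to $\im u_1$ and to each other. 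In the rigid cases --- broadly, those in which the middle map $v_1 s_1 = u_2 s_2$ is forced to be constant, or in which right-unit injectivity propagates through the skeleton --- $Y_\sigma$ turns out to be a finite set determined by iterating Lemma \ref{lem:rightunitset}. In the flexible cases --- typically, those where $v_2$ is constant and $s_2$ has freedom outside $\im u_2$ --- $Y_\sigma$ takes the form of a cone $r^\uparrow$ or $r^\downarrow$ in $C$, since order-preservation of $s_2$ forces $y$ to lie on one side of a fixed value $r \in C$ built from $\sigma$ and $s_1$.

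The role of $C$ having no endpoints enters combinatorially. A finite union of cones in $C$ together with a finite set can equal $C$ only if ``upward'' and ``downward'' cones meet at some hinge element. In the endpoint-shiftable case of Proposition \ref{prop:endpoint-shiftable} the endpoint $z$ plays exactly this role, and in fact the entire construction there funnels arbitrary pairs through $c_z$. Without endpoints, no such privileged hinge is available, and I would complete the contradiction by showing that no finite collection of skeletons can consistently produce matching upward and downward cones that overlap; the absence of endpoints means that $C$ is cofinal and coinitial in itself in a way that allows $y$ to be chosen deep enough in $C$ (in one direction or the other) to escape every $Y_\sigma$ simultaneously. This will likely be coupled with a dual analysis of length-$2$ sequences from $c_y$ back to $1_C$ (or applied to a second pair) to pin down both ends.

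The main obstacle is the subcases where several of $v_1, u_2, v_2$ are non-constant non-right-units, so that $s_1$ and $s_2$ each carry substantial freedom; extracting a cone or finite description of $Y_\sigma$ requires careful tracking of how the constant-value constraints $s_1|_{\im u_1} \equiv 1_C u_1^{-1}$ (in the right-unit sense) and $s_2|_{\im v_2} \equiv y$ interact with the middle equation under order-preservation. I expect this to reduce, after an image analysis, to the dichotomy: either $\im u_2 \cap \im v_2 \neq \es$ (or a related incidence condition), which pins $y$ down to a single Lemma \ref{lem:rightunitset}-controlled value, or $\im u_2$ and $\im v_2$ are order-separated, in which case order-preservation of $s_2$ produces the cone description of $Y_\sigma$ that can be dodged by exploiting the lack of endpoints.
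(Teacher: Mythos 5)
There is a genuine gap, and it is fatal to the strategy rather than a repairable detail: the target pair $(1_C,c_y)$ cannot witness $D_r(S;U)>2$, because for a chain with no endpoints it can happen that \emph{every} pair $(1_C,c_y)$ is connected by a $U$-sequence of length $2$ even though $D_r(S;U)=3$. Concretely, take $C=\mathbf{Z}$ and $U=\{1_C,c_{-1},c_0,c_1\}$. For any $y\geq 0$ choose an order-preserving $s_2$ with $0s_2=0$ and $1s_2=y$ (e.g.\ $ns_2=0$ for $n\leq 0$ and $ns_2=y+n-1$ for $n\geq 1$); then
$$1_C=1_C\cdot 1_C,\quad c_0\cdot 1_C=c_0\cdot s_2,\quad c_1\cdot s_2=c_y$$
is a $U$-sequence of length $2$, and dually $c_{-1}$ handles $y\leq 0$. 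So the reachable sets are exactly the two overlapping cones $0^{\uparrow}$ and $0^{\downarrow}$, whose union is all of $C$. This also refutes the key covering claim in your sketch: the ``hinge'' that lets an upward and a downward cone meet need not be an endpoint of $C$ --- here it is simply the image point of the constant map $c_0\in U$ --- so the absence of endpoints gives you no way to choose $y$ escaping every $Y_\sigma$. Your analysis of the rigid skeletons via Lemma \ref{lem:rightunitset} is sound as far as it goes, but the flexible skeletons genuinely do produce cones covering $C$, so no contradiction is available from constant targets.

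The paper avoids this by testing both directions \emph{simultaneously} with a single non-constant target: it builds finite sets $X,Y$ from witnesses to the dichotomy ``$B(\d,\e)=\{x:x\d\leq x\e\}$ or $B(\e,\d)$ is unbounded above'' for each pair $\d,\e\in U$, uses the fact (from Corollary \ref{cor:1-1RU}, which needs the no-endpoints hypothesis) that images of right units in $U$ are unbounded above and below to pick $p<q$ with $p\a<X$ and $q\a>Y$ for all right units $\a\in U$, and then takes as target the two-valued step map $\th$ with $m^{\downarrow}\th=\{u\}$ and value $v$ above, where $u<p<q<v$. Any length-$2$ sequence from $1_C$ to $\th$ then forces either $u\geq p$ or $v\leq q$, a contradiction. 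If you want to salvage your approach, you would need to replace $c_y$ by some such map whose image is spread both far down and far up relative to the finite data extracted from $U$; a single constant cannot do this.
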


\begin{proof} 
Let $S=\End(C),$ and let $R$ denote the set of right units of $S.$  Aiming for a contradiction, suppose that $D_r(S;U)=2$ for some finite set $U\subseteq S.$

For each $\d,\e\in U$ (not necessarily distinct) define
\[B(\d,\e)=\{x\in C : x\d\leq x\e\}.\]
Since $C=B(\d,\e)\cup B(\e,\d),$ at least one of $B(\d,\e)$ or $B(\e,\d)$ is unbounded above.  Let
$$V=\{(\d,\e)\in U\times U : B(\d,\e)\text{ is unbounded above}\}.$$
For each $(\d,\e)\in V$ fix some $x(\d,\e)\in B(\d,\e),$  and let $m=\max\{x(\d,\e) : (\d,\e)\in V\}.$  Then $C=m^{\downarrow}+D,$ where $D=\{c\in C : c>m\}.$  By the unboundedness assumption, for each $(\d,\e)\in V$ we may fix some $y(\d,\e)\in B(\d,\e)\cap D$.  Let
$$X=\{x(\d,\e)\b : (\d,\e)\in V,\,\b\in U\} \AND Y=\{y(\d,\e)\b : (\d,\e)\in V,\,\b\in U\},$$
and note that $X$ and $Y$ are finite.
Since $C$ has no endpoints, by Corollary \ref{cor:1-1RU} the image of any map in $R$ is unbounded both above and below in $C.$  Thus, for each $\a\in U\cap R$ we may fix some $p_\a,q_\a\in C$ with $p_\a<q_\a$ such that $p_\a\a<X$ and $q_\a\a>Y.$  We then let
$$p=\min\{p_\a : \a\in U\cap R\} \AND q=\max\{q_\a : \a\in U\cap R\},$$
so that $p\a<X$ and $q\a>Y$ for each $\a\in U\cap R.$
Now fix some $u,v\in C$ with $u<p<q<v,$ and define $\th\in S$ by $m^{\downarrow}\th=\{u\}$ and $D\th=\{v\}.$  Since $D_r(S)=2,$ there exists a $U$-sequence
$$1_C=\a\g_1,\ \b\g_1=\d\g_2,\ \e\g_2=\th.$$
Note that $\a$ is a right unit.  We consider two separate cases.

\textit{Case 1}.
Suppose first that $(\d,\e)\in V$.  Let $x=x(\d,\e)$ and $y=y(\d,\e).$ Then, since $x\d\leq x\e$ (as $x\in B(\d,\e)$) and $p\a<x\b\in X,$ we have
\[u=x\th=x\e\g_2\geq x\d\g_2=x\b\g_1\geq p\a\g_1=p1_C=p,\]
and this contradicts $u<p.$

\textit{Case 2}.  Finally, suppose that $(\e,\d)\in V.$  Let $x=x(\e,\d)$ and $y=y(\e,\d).$  Then, since $y\e\leq y\d$ (as $y\in B(\e,\d)$) and $q\a>y\b\in Y,$ we have
\[v=y\th=y\e\g_2\leq y\d\g_2=y\b\g_1\leq q\a\g_1=q1_C=q,\]
and this contradicts $v>q.$
\end{proof}

To complete the proof of Theorem \ref{thm:B}, we show in Proposition \ref{prop:notES} that $D_r(\End(C))=3$ if $C$ has an endpoint but is not endpoint-shiftable.  First, we establish some technical lemmas.  In what follows, a map $\a\in\End(C)$ is said to {\em fix} a subset $A\subseteq C$ {\em pointwise} if $x\a=x$ for all $x\in A.$

\begin{Lem}\label{lem:CD}
Let $C$ be an infinite chain that has an endpoint but is not endpoint-shiftable.  Then $C=M+D+N$ for some finite chains $M$ and $N$ (where at least one of $M$ and $N$ is non-empty) and some chain $D$ with no endpoints.  Moreover, every right/left unit of $\End(C)$ fixes $M\cup N$ pointwise and restricts to a right/left unit of $\End(D).$ 
\end{Lem}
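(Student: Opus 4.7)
The plan is to first build the decomposition $C=M+D+N$ combinatorially, then establish the technical heart that every surjective endomorphism of $C$ fixes $M\cup N$ pointwise with singleton preimages, and finally deduce the claims about right/left units and their restrictions to $D.$ By the symmetry $C\leftrightarrow C^*$ I assume throughout that $C$ has a minimum $z_0$ and work in detail with $M;$ the argument for $N$ (and for $C$ with a maximum) is dual.

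To construct $M,$ I take the maximal tower $\{z_0<z_1<\cdots\}$ in $C$ with each $z_{i+1}$ the unique cover of $z_i.$ This tower must be finite: otherwise $M\cong\bf{N}$ is an initial segment of $C,$ so $C\cong\bf{N}+E$ for some chain $E,$ and since $\bf{N}$ is min-shiftable by Proposition \ref{prop:msexamples}(1), so is $C$ by Proposition \ref{prop:ms}(4), contradicting the hypothesis. Hence $M=\{z_0,\dots,z_k\}$ is finite, and since $C$ is infinite the maximality condition forces $z_k$ to have no cover, so $C\setminus M$ has no minimum. Dually build $N$ from the maximum (if any); setting $D=C\setminus(M\cup N),$ this chain has no endpoints, and $M\cup N\neq\es$ since $C$ has an endpoint.

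The technical heart is a simultaneous induction on $j$ establishing, for every surjective $\b\in\End(C)$ and every $z_j\in M,$ that both $z_j\b=z_j$ and $z_j\b^{-1}=\{z_j\}.$ The base case $j=0$ is immediate: $z_0$ is fixed since it is the minimum, and $z_0\b^{-1}=\{z_0\}$ comes from non-min-shiftability via Proposition \ref{prop:es}(2). For the inductive step, surjectivity yields $y$ with $y\b=z_{j+1};$ monotonicity and the cover structure force $y\geq z_{j+1},$ hence $z_{j+1}\b\in[z_j,z_{j+1}]=\{z_j,z_{j+1}\},$ and the value $z_j$ is ruled out by $z_j\b^{-1}=\{z_j\}$ from the inductive hypothesis, giving $z_{j+1}\b=z_{j+1}.$ For the singleton preimage, if some $y>z_{j+1}$ lay in $z_{j+1}\b^{-1},$ I would construct a new surjective $\phi\in\End(C)$ shifting $M$ down one level: set $\phi(z_\ell)=z_{\max(\ell-1,0)}$ for $0\leq\ell\leq j+1,$ set $\phi(x)=z_{j+1}$ for $x\in z_{j+1}\b^{-1}$ with $x>z_{j+1},$ and set $\phi(x)=x\b$ on $\{x\in C:x\b>z_{j+1}\}.$ Order-preservation follows from the cover structure and monotonicity of $\b;$ surjectivity follows from that of $\b$ together with the fact that any $w>z_{j+1}$ is hit by $x\b$ with $x$ strictly above $z_{j+1}\b^{-1}.$ But then $\{z_0,z_1\}\subseteq z_0\phi^{-1},$ so $C$ is min-shiftable by Proposition \ref{prop:es}(2), the desired contradiction.

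By Lemma \ref{lem:leftunits}, left units are exactly surjective endomorphisms, hence fix $M\cup N$ pointwise. For a right unit $\a$ with right inverse $\b,$ the identity $z_j=z_j\a\b$ combined with $z_j\b^{-1}=\{z_j\}$ forces $z_j\a=z_j,$ so $\a$ likewise fixes $M\cup N.$ Injectivity of $\a$ and the equality $\b^{-1}(M\cup N)=M\cup N$ then ensure both $\a$ and $\b$ send $D$ into $D,$ and $\a|_D\b|_D=1_D$ exhibits $\a|_D$ as a right unit of $\End(D);$ the left-unit case is dual. The main obstacle will be the shift construction of $\phi,$ which requires careful verification of order-preservation at three interfaces (within the shifted tower, from tower to slack, and from slack to the rest) and of surjectivity after collapsing the slack, the latter hinging on the observation that $x\b>z_{j+1}$ forces $x$ strictly above $z_{j+1}\b^{-1}.$
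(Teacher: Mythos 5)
Your proof is correct. The decomposition $C=M+D+N$ is obtained exactly as in the paper: peel off minima (equivalently, successive covers) until none remains, observing that an infinite such tower would give $C\cong\mathbf{N}+E$ and hence min-shiftability via Propositions \ref{prop:ms} and \ref{prop:msexamples}(1). Where you genuinely diverge is in the ``moreover'' part. The paper argues in a single step: if a surjective $\b$ sends some $u\neq z_k$ to $z_k\in M$, then surjectivity forces $u>z_k$, so $z_k^{\uparrow}$ is a quotient of its proper terminal segment $u^{\uparrow}$ and is therefore min-shiftable by Proposition \ref{prop:ms}, whence $C\cong(\mathbf{k-1})+z_k^{\uparrow}$ is min-shiftable by the same proposition --- contradicting the hypothesis. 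You instead run an induction along $M$, proving simultaneously that each $z_j$ is fixed and has singleton preimage, and you manufacture the min-shifting surjection $\phi$ by hand (shifting the tower down one level and collapsing the slack in $z_{j+1}\b^{-1}$) so as to contradict Proposition \ref{prop:es}(2) directly. Your route is more elementary and self-contained --- it essentially re-proves, in the special case needed, the implications $(3)\Rightarrow(1)$ and $(8)\Rightarrow(1)$ of Proposition \ref{prop:ms} that the paper simply cites --- at the cost of the careful case analysis and order-preservation checks you flag at the end; both arguments end up establishing the same facts (each $z_j$ fixed with $z_j\b^{-1}=\{z_j\}$), and your concluding deductions for right units, for $D\a,D\b\subseteq D$, and for $\a|_D\b|_D=1_D$ coincide with the paper's.
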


\begin{proof}  
Let $S=\End(C)$.  If $C$ has no minimum, then we define $M=\es.$  So now suppose that $C$ has a minimum, say $z_1$.  Then $C=\{z_1\}+C_1$, where $C_1=C{\setminus}\{z_1\}$.  If $C_1$ has a minimum, say $z_2$, then $C=\{z_1<z_2\}+C_2$, where $C_2=C_1{\setminus}\{z_2\}$.  This process must terminate at some point, for otherwise we would have
\[C=\{z_1<z_2<z_3<\cdots\}+C'\cong\bf{N}+C',\]
which is min-shiftable by Propositions \ref{prop:ms} and \ref{prop:msexamples}(1).  So, there exists $m\in\N$ such that $C=\{z_1<\dots<z_m\}+C_m$ where $C_m$ has no minimum.  We let $M=\{z_1<\dots<z_m\}.$  The definition of $N$ is analogous, and then $C=M+D+N$ where $D=C_m{\setminus}N$ has no endpoints.

Now consider $\a,\b\in S$ such that $\a\b=1_C$, and let $\a',\b'$ be the restrictions of $\a,\b$ to $D,$ respectively.  We show that $\a$ and $\b$ both fix $M\cup N$ pointwise, and $D\a,D\b\subseteq D.$  It then follows that $\a'\b'=1_D$, as desired.

First, we show that $\b$ fixes $M\cup N$ pointwise and $D\b\subseteq D.$  To do so, suppose to the contrary that there exists $u\in C$ such that $u\b\in M\cup N$ with $u\neq u\b.$  We may assume without loss of generality that $u\b\in M.$  With $M=\{z_1<\cdots<z_m\}$ as above, suppose that $u\b=z_k$.  Since $\b$ is surjective and order preserving, it follows that $u>z_k$, and hence $z_k^{\uparrow}$ is a quotient of its proper terminal segment $u^{\uparrow}$.  
Thus, by Proposition \ref{prop:ms}, the chain $z_k^{\uparrow}$ is min-shiftable.  
Now, either $C=z_k^{\uparrow}$ (and is hence min-shiftable) or else
$$C=\{z_1<\dots<z_{k-1}\}+z_k^{\uparrow}\cong(\bf{k-1})+z_k^{\uparrow},$$
which is min-shiftable by Proposition \ref{prop:ms}.  We have reached a contradiction. 

Now, if $x\in M\cup N$ then $x=(x\a)\b=x\a,$ as $\b$ fixes $M\cup N$ pointwise and $D\b\subseteq D.$  Thus $\a$ fixes $M\cup N$ pointwise.  Since $\a$ is injective, it follows that $D\a\subseteq D,$ as required.
\end{proof}

\begin{Lem}\label{lem:shifting} 
Let $C,$ $D$ and $E$ be pairwise-disjoint chains with $D$ infinite, and let $K$ and $L$ be finite subsets of $C$ and $E,$ respectively.  Then there exists a homomorphism $\th : C+D+E\to D$ such that:
\begin{enumerate}
\item[\emph{(1)}] $\th$ is injective on $K\cup L$;
\item[\emph{(2)}] if $C=\es$ and $D$ has no minimum, then $\im\th$ is unbounded below;
\item[\emph{(3)}] if $E=\es$ and $D$ has no maximum, then $\im\th$ is unbounded above.
\end{enumerate}
\end{Lem}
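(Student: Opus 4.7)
The plan is to construct $\theta$ piecewise on $C$, $D$, and $E$, using the infinitude of $D$ to choose image targets for $K$ and $L$ inside $D$, and using step functions to preserve the order. I would split into cases depending on whether $C$ and $E$ are empty, since conditions (2) and (3) become active only when the corresponding side is missing, and the main technical point is that these conditions forbid the otherwise-natural constant-like behaviour on $D$.

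Write $K=\{x_1<\cdots<x_k\}$ and $L=\{y_1<\cdots<y_l\}$. In the main case, where $C$ and $E$ are both non-empty, I would fix $k+l+1$ distinct elements $d_1<\cdots<d_k<d_*<e_1<\cdots<e_l$ in $D$ and define $\theta$ by: for $x\in C$, set $x\theta=d_i$ with $i$ minimal such that $x\leq x_i$, and $x\theta=d_*$ if $x>x_k$; for every $x\in D$, set $x\theta=d_*$; for $y\in E$, set $y\theta=e_j$ with $j$ maximal such that $y\geq y_j$, and $y\theta=d_*$ if $y<y_1$. A direct case-by-case comparison across the three blocks shows that $\theta$ is a homomorphism, and injectivity on $K\cup L$ follows because all the chosen $d_i$ and $e_j$ are distinct; conditions (2) and (3) are vacuous here.

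For the case $C=\es$ with $E$ non-empty, I would instead pick $l+1$ distinct elements $d_*<e_1<\cdots<e_l$ in $D$ and modify the $D$-part of the definition: set $x\theta=x$ for $x\in D$ with $x\leq d_*$ and $x\theta=d_*$ for $x\in D$ with $x>d_*$, keeping $\theta$ on $E$ as above. Then $\im\theta\supseteq D\cap d_*^{\downarrow}$, which is unbounded below in $D$ whenever $D$ has no minimum, delivering (2); order-preservation is again routine. The case $E=\es$ with $C\neq\es$ is dual (using the identity on the upper half of $D$ to deliver (3)), and in the remaining case $C=E=\es$ the domain reduces to $D$ itself and $\theta=1_D$ trivially satisfies all three conditions. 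Since (2) and (3) are simultaneously required only in this last residual case, the identity-on-a-segment trick suffices in each non-trivial sub-case; the only real (and mild) obstacle is spotting that a purely constant rule on $D$ would spoil the unboundedness clauses, which the half-identity modification avoids.
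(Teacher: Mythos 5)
Your proof is correct and follows essentially the same strategy as the paper's: collapse the bulk of $D$ to a single point, map $C$ and $E$ order-preservingly onto finitely many chosen points of $D$ below and above that point (ensuring injectivity on $K\cup L$), and in the one-sided cases replace the collapse by the identity on a lower (or upper) segment of $D$ to secure unboundedness. The only difference is presentational: you spell out explicit step functions where the paper simply asserts the existence of suitable order-preserving maps into segments $A$ and $B$ of $D$ chosen with $|A|>|K|$ and $|B|>|L|$.
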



\begin{proof} 
If $C=E=\es,$ let $\th=1_D$.  Suppose now that $C\cup E\neq\es.$  Pick $x\in D$ such that, letting $A=D\cap x^{\downarrow}$ and $B=(D\cap x^{\uparrow}){\setminus}\{x\},$ we have $|A|>|K|$ and $|B|>|L|.$  Clearly $D=A+B.$
If both $C$ and $E$ are non-empty, let $C\th\subseteq A,$ $D\th=\{x\}$ and $E\th\subseteq B$, with $\th$ preserving order and being injective on $K\cup L.$  If $C=\es$ and $D$ has no minimum, let $\th$ fix $A$ pointwise, let $B\th=\{x\},$ and let $E\th\subseteq B$ with $\th$ preserving order and being injective on $L.$  The case that $E=\es$ and $D$ has no maximum is symmetrical.
\end{proof}

\begin{Lem}\label{lem:trim}
Let $C=M+D+N$ be a chain where $M$ and $N$ are finite and $D$ has no endpoints.  Let 
$$(A_1,B_1)<_{DM}(A_2,B_2)<_{DM}\cdots<_{DM}(A_n,B_n)$$ be a finite chain of gaps of $C,$ and let $F$ be a finite subset of $D.$  Then there exist terminal segments $T_k$ $(\neq\es)$ of $A_k$ $(1\leq k\leq n)$, initial segments $I_k$ $(\neq\es)$ of $B_k$ $(1\leq k\leq n)$, and intervals $D_p$ of $D$ $(1\leq p\leq n+1)$ such that 
$$D=D_1+T_1+I_1+D_2+T_2+I_2+\cdots+T_n+I_n+D_{n+1},\quad\: F\subseteq D_1+D_2+\cdots+D_{n+1},$$ 
and for each $p\in\{1,\dots,n+1\}$ the set $F\cap D_p$ contains no endpoints of $D_p$.
\end{Lem}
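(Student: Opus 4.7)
The plan is to identify each gap of $C$ with a gap of $D$, and then iteratively choose ``boundary'' elements $a_k$ and $b_k$ that will determine where the decomposition cuts. Since $M$ and $N$ are finite but $D$ has no endpoints, every gap $(A_k,B_k)$ of $C$ satisfies $M\subseteq A_k$ and $N\subseteq B_k$, so writing $A_k':=A_k\cap D$ and $B_k':=B_k\cap D$ gives a gap of $D$ in which $A_k'$ has no maximum and $B_k'$ has no minimum. I would then partition $F$ into pieces localised between consecutive gaps, namely $F_1=F\cap A_1'$, $F_p=F\cap(A_p'\setminus A_{p-1}')$ for $1<p\le n$, and $F_{n+1}=F\cap B_n'$; this reduces the problem to controlling one element per gap.

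Next, I would pick the $a_k$ one at a time (from left to right), ensuring that $a_k\in A_k'\setminus A_{k-1}'$ (with the convention $A_0'=\es$), that $a_k>F_k$, and crucially that the immediate predecessor of $a_k$ in $D$, if it exists, does not lie in $F$. Having fixed all the $a_k$, I would pick each $b_k\in B_k'\cap A_{k+1}'$ for $k<n$, or $b_k\in B_n'$ for $k=n$, with $b_k<F_{k+1}$, with $b_k<a_{k+1}$ where meaningful, and with the immediate successor of $b_k$ in $D$, if any, outside $F$. Setting $T_k=\{x\in A_k':x\ge a_k\}$, $I_k=\{x\in B_k':x\le b_k\}$, $D_1=\{x\in D:x<a_1\}$, $D_p=\{x\in D:b_{p-1}<x<a_p\}$ for $1<p\le n$, and $D_{n+1}=\{x\in D:x>b_n\}$, the desired decomposition $D=D_1+T_1+I_1+\cdots+T_n+I_n+D_{n+1}$ holds by construction. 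Each $F_p$ is forced into $D_p$ by the inequalities above, giving $F\subseteq\bigcup_p D_p$; and the only possible endpoints of any $D_p$ are the immediate successors of the $b_{p-1}$ and the immediate predecessors of the $a_p$ in $D$ (the outer pieces $D_1$ and $D_{n+1}$ inherit the absence of a minimum and of a maximum from $D$), so by construction these endpoints avoid $F$.

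The hard part is verifying that each iterative choice is always feasible. The allowed region for $a_k$ consists of the elements of $A_k'\setminus A_{k-1}'$ strictly above $\max(F_k\cup\{b_{k-1}\})$ whose immediate predecessor in $D$ is not in $F$. The first two constraints leave a cofinal subset of $A_k'\setminus A_{k-1}'$, because this difference has no maximum (it inherits cofinality from $A_k'$, using that $A_{k-1}'$ is an initial segment of $D$ lying strictly below the $k$th gap), and the remaining constraint removes at most $|F|$ further points, one per element of $F$. Since an infinite cofinal set cannot be exhausted by finitely many exclusions, a valid $a_k$ exists; the dual argument produces $b_k$, using that $B_k'\cap A_{k+1}'$ has no minimum.
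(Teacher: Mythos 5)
Your argument is correct and matches the paper's proof in essence: both decompose $D$ along the gaps into the regions $A_1\cap D$, $B_{k-1}\cap A_k$ ($2\le k\le n$) and $B_n\cap D$, and then choose two cut points in each region strictly sandwiching the relevant part of $F$. The only real difference is that the paper takes the $D_p$ to be closed at the cut points (so the endpoints of $D_p$ are the cut points themselves, manifestly outside $F$), whereas you take them open and therefore need --- and correctly supply --- the extra stipulation that the immediate neighbours in $D$ of your cut points avoid $F$.
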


\begin{proof}
First note that
$$D=(A_1\cap D)+(B_1\cap A_2)+(B_2\cap A_3)+\cdots+(B_{n-1}\cap A_n) +(B_n\cap D),$$
with each summand being unbounded above and below.  For each $k\in\{2,\dots,n\}$ choose $x_k,y_k\in B_{k-1}\cap A_k$ such that $x_k<B_{k-1}\cap A_k\cap F<y_k$.  Also let $y_1\in A_1\cap D$ and $x_{n+1}\in B_n\cap D$ 
be such that $A_1\cap F<y_1$ and $x_{n+1}<B_n\cap F.$  We then define
$$D_1=y_1^{\downarrow}\cap D,\quad D_k=[x_k,y_k]\,\ (2\leq k\leq n)\quad\text{and}\quad D_{n+1}=x^{\uparrow}_{n+1}\cap D.$$
These then determine appropriate subchains $T_k,I_k$ ($1\leq k\leq n$) such that
$$A_1\cap D=D_1+T_1,\:B_{k-1}\cap A_k=I_{k-1}+D_k+T_k\ (2\leq k\leq n)\:\text{ and }\:B_n\cap D=I_n+D_{n+1}.$$
Verification of the claimed properties is routine.
\end{proof}
 
\begin{Prop}\label{prop:notES} 
If $C$ is an infinite chain that has an endpoint but is not endpoint-shiftable, then $D_r(\End(C))=3.$
\end{Prop}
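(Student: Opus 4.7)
My approach is to adapt the proof of Proposition \ref{prop:noends} to the endpoint case, where a new phenomenon forces a more delicate choice of witnesses. Suppose, for a contradiction, that $D_r(\End(C);U)\leq 2$ for some finite $U\subseteq S:=\End(C).$ By Lemma \ref{lem:CD} we may write $C=M+D+N,$ where $M,N$ are finite with $M\cup N\neq\es$ and $D$ is infinite with no endpoints, and moreover every right unit $\a$ of $S$ fixes $M\cup N$ pointwise and restricts to a right unit of $\End(D).$ Consequently, in any length-$2$ $U$-sequence $1_C=\a\g_1,\ \b\g_1=\d\g_2,\ \e\g_2=\th,$ the element $\a$ must be a right unit of $S,$ and every right inverse $\g_1$ of $\a$ is forced to fix $M\cup N$ pointwise.

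Following Proposition \ref{prop:noends} I would associate to each pair $(\d,\e)\in U\times U$ the set $B(\d,\e)=\{x\in C:x\d\leq x\e\}.$ Since $D$ has no endpoints, for every such pair at least one of $B(\d,\e)\cap D$ or $B(\e,\d)\cap D$ is unbounded above in $D,$ and likewise below; sort the ordered pairs into $V^{\uparrow}$ and $V^{\downarrow}$ accordingly. I would then pick witnesses $x(\d,\e),y(\d,\e)\in D$ in the appropriate $B$-sets, with the $y$'s strictly above the (finite) collection of $x$'s, and, applying Corollary \ref{cor:1-1RU} to the restriction $\a|_D$ of each right unit $\a\in U$ (whose image is unbounded above and below in $D$), choose $p_\a,q_\a\in D$ with $p_\a\a$ below, and $q_\a\a$ above, the relevant finite subsets of $D.$ Using Lemmas \ref{lem:shifting} and \ref{lem:trim} to decompose $D$ around the finitely many gaps of $C$ that obstruct a piecewise construction, I would build an endomorphism $\th\in S$ collapsing a lower initial part of $C$ (containing $M$ together with all $x$-witnesses) to a value $u\in D$ with $u<\min_\a p_\a,$ and a terminal part (containing $N$ and all $y$-witnesses) to $v\in D$ with $v>\max_\a q_\a.$ A case analysis on which of $(\d,\e)$ or $(\e,\d)$ lies in $V^{\uparrow}$ or $V^{\downarrow}$ then yields, via the chain $u=x(\d,\e)\th=x(\d,\e)\e\g_2\geq x(\d,\e)\d\g_2=x(\d,\e)\b\g_1\geq p_\a\a\g_1=p_\a$ and its dual using $y$'s and $q_\a,$ the desired contradiction.

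The main obstacle is a phenomenon absent from the endpoint-free case: a witness value $x(\d,\e)\b$ or $y(\d,\e)\b$ may land in $M$ or $N,$ where the right inverse $\g_1$ acts as the identity. In that event the pivotal inequality $p_\a\a\g_1\leq x(\d,\e)\b\g_1$ collapses to a triviality and no longer contradicts $u<p_\a.$ The crucial manoeuvre is therefore to arrange that $x(\d,\e)\b\notin M$ and $y(\d,\e)\b\notin N$ for every $\b\in U,$ by choosing the $x$-witnesses above a threshold $K^M\in D$ and the $y$-witnesses below a threshold $K^N\in D.$ Such thresholds exist provided that, for each $\b\in U,$ the set $\b^{-1}(M)\cap D$ is a proper initial segment of $D$ and $\b^{-1}(N)\cap D$ is a proper terminal segment of $D.$ The genuinely delicate sub-case, in which some $\b\in U$ crushes all of $D$ into $M$ (or into $N$), is handled by switching to the dual witness and exploiting that for each pair at least one of $(\d,\e), (\e,\d)$ lies in $V^{\uparrow}$ and at least one in $V^{\downarrow}.$ Lemma \ref{lem:trim} then serves to split $D$ around the (finitely many) gaps of $C$ relevant to the argument so that $\th,$ constructed via Lemma \ref{lem:shifting}, is a valid endomorphism respecting the threshold constraints and the required collapsing behaviour.
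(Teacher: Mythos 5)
Your plan transplants the two-valued collapse map and the $B(\d,\e)$ dichotomy from Proposition \ref{prop:noends}, but the obstacle you yourself identify is not actually overcome, and it is fatal. In the sequence $1_C=\a\g_1,\ \b\g_1=\d\g_2,\ \e\g_2=\th$ the map $\b$ is an arbitrary element of $U$ and need not respect the decomposition $C=M+D+N$; in particular $U$ may contain $c_z$ (or any map sending all of $D$ into $M$), in which case \emph{every} witness $x$ has $x\b\in M$, and since every right inverse $\g_1$ of $\a$ is a left unit and hence fixes $M\cup N$ pointwise (Lemma \ref{lem:CD}), the pivotal quantity $x\b\g_1=x\b$ lies in $M$. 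Your threshold device is then vacuous, and your fallback of ``switching to the dual witness'' requires an upper-part witness $y$ with $y\e\leq y\d$; if $x\d<x\e$ for all $x$ (i.e.\ $B(\e,\d)=\es$) no such $y$ exists, and both inequality chains degenerate to the trivial bounds $u\geq z$ and $v\geq z$. Nothing in your sketch rules out this configuration, so no contradiction is reached.

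There is a second, structural gap: your invocation of Lemmas \ref{lem:shifting} and \ref{lem:trim} is not attached to any concrete step of your argument --- a two-valued map is an endomorphism outright and needs neither lemma --- which suggests the real difficulty has been missed. The paper's proof uses a completely different witness $\th$: one that is \emph{injective} on a finite set $F\supseteq M\cup N\cup J\cup K$ (where $J$ is the finite set of values $(z\a)\g$ over right inverses $\g$, finite by Lemma \ref{lem:rightunitset}) and whose image avoids $F$; it orients the sequence as $\th=\a\g,\ \b\g=\a'\g',\ \b'\g'=1_C$ so that injectivity of $\th$ on $F$ forces $\a$ to be injective, and then either $\a$ is a right unit (contradiction by locating $z\b\g\in J$ inside $\im\th$, which misses $F$) or $\a$ is an injective non-right-unit, in which case $\th$ must be built with Lemmas \ref{lem:trim} and \ref{lem:shifting} to straddle the gap of $C$ witnessing the failure of Corollary \ref{cor:1-1RU}(3)(c) for $\a$. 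None of this machinery --- the set $J$, injectivity of $\th$ on $F$, the gap-straddling --- appears in your proposal, and the two-valued $\th$ cannot substitute for it.
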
 
 
\begin{proof} 
By Lemma \ref{lem:CD} we may write $C=M+D+N$, where $M$ and $N$ are finite and $D$ is unbounded below and above.  We assume without loss of generality that $M\neq\es$ with minimum element $z.$
Let $S=\End(C),$ and let $R$ be the set of right units of $S.$  Suppose for a contradiction that $D_r(S;U)=2$ for some finite $U\subseteq S.$ 

Now, it follows from Lemma \ref{lem:rightunitset} that the set
$$J=\{(z\a)\g : \a\in U,\,\g\text{ is a right inverse of some }\b\in U\cap R\}$$
has size at most $2|U|^2.$
For each non-injective $\a\in U$ fix some $a_\a,b_\a\in C$ with $a_\a\neq b_\a$ and $a_\a\a=b_\a\a,$ and let 
$$K=\{a_\a,b_\a : \a\in U,\,\a\text{ is not injective}\}.$$  
Let $F=M\cup N\cup J\cup K,$ and note that $F$ is finite.  

Now define a subset $T$ of $U$ as follows.  If $N\neq\es,$ let $T$ be the set of all injective $\d\in U{\setminus}R.$  If $N=\es$, let $T$ be the set of all injective $\d\in U{\setminus}R$ such that $\im\d$ is unbounded above. 

The remainder of the proof splits into two cases.

\textit{Case 1}: $T=\es$.  Choose $x\in D$ such that $x\geq F\cap D.$  (If $F\cap D\neq\emptyset,$ we can take $x=\text{max}(F\cap D)$; otherwise, we can take $x$ to be any element of $D.$)  Let $C'=D{\setminus}x^{\downarrow}$, so that $C=x^{\downarrow}+C'+N.$  Then $F\cap C'=\es.$  By Lemma \ref{lem:shifting}, there exists $\th\in S$ with $\im\th\subseteq C'$ such that $\th$ is injective on $F,$ and if $N=\es$ then $\im\th$ is unbounded above in $C'.$  Since $D_r(S;U)=2,$ there exists a $U$-sequence
$$\th=\a\g,\ \b\g=\a'\g',\ \b'\g'=1_C.$$ 
The map $\a\in U$ is injective, since otherwise
$a_\a\th=a_\a\a\g=b_\a\a\g=b_\a\th,$ contradicting that $\th$ is injective on $F\supseteq K.$
 Moreover, if $N=\es$ then $\im\a$ is unbounded above, since if $\im\a\leq q$ then $\im\th=\im\a\g\leq q\g,$ contradicting that $\im\th$ is unbounded above.  Since $T=\emptyset,$ we conclude that $\a\in R.$  Then, since $C$ is not endpoint-shiftable, it follows from Lemma \ref{lem:CD} that $\a$ fixes $M\cup N$ pointwise.  Thus $x\g=x\th$ for all $x\in M\cup N.$  Now, we may pick $y\in C$ such that $z\b\leq y\a.$  Indeed, if $N=\es$ then, as noted above, the set $\im\a$ is unbounded above; otherwise, the chain $C$ has a maximum, say $w,$ and then $z\b\leq w=w\a$ (as $w\in N$).  Therefore, since $z\in M,$ we have $z\a=z\leq z\b\leq y\a,$ and hence
$$z\th=z\a\g\leq z\b\g\leq y\a\g=y\th.$$
Since $\im\th\subseteq C'$, and $C'$ is an interval, it follows that $z\b\g\in C'.$  But $z\b\g=z\a'\g'\in J\subseteq F,$ contradicting the fact that $C'\cap F=\es.$

\vspace{0.5em}
\textit{Case 2}: $T\neq\es.$  Since each $\d\in T$ is injective but not a right unit, it must fail at least one of the conditions (a)-(c) of Corollary \ref{cor:1-1RU}(3).  But $C$ has a minimum, so (a) vacuously holds.  If $C$ has no maximum then $N=\es,$ and hence, by the definition of $T,$ if $\d\in T$ then $\im\d$ is unbounded above, so $\d$ satisfies (b).  Thus, each $\d\in T$ fails condition (c), and hence, by Remark \ref{rem:notRU}, there exists $x_\d\in C{\setminus}\!\im\d$ such that $x_\d>z\d$ and 
$$\mathcal{G}_\d:=(\{t\in C: t\d <x_\d\},\{t\in C: x_\d<t\d\})$$
is a gap of $C$ (so $x_\d\in D$).  
Let $$(A_1,B_1)<_{DM}(A_2,B_2)<_{DM}\cdots<_{DM}(A_n,B_n)$$ be the chain of distinct gaps among $\mathcal{G}_\d$ ($\d\in T$) in the Dedekind-MacNeille completion.

Let $P$ be the set of all pairs $(\a,\b)\in U\times U$ with $\a\in R$ and $z\b\notin\im\a.$  For each $(\a,\b)\in P,$ choose a right inverse $\lam_\a$ of $\a,$ and let $u_{\a,\b}=z\b\lam_\a$.  Since, by Lemma \ref{lem:CD}, the maps $\a$ and $\lam_\a$ fix $M\cup N$ pointwise and map $D$ to $D,$ and we have $z\b\notin\im\a$ by the definition of $P,$ it follows that $u_{\a,\b}=(z\b)\lam_\a\in D\lam_\a\subseteq D.$  Moreover, by Lemma \ref{lem:rightunitset} we have
$$u_{\a,\b}\a=
\begin{cases}
\max(\im\a\cap(z\b)^{\downarrow})&\text{if }u_{\a,\b}\a<z\b\\
\min(\im\a\cap(z\b)^{\uparrow})&\text{if }u_{\a,\b}\a>z\b.
\end{cases}$$
Now, applying Lemma \ref{lem:trim} to the gaps $(A_k,B_k)$ ($1\leq k\leq n$) and the finite set 
$$G=(F\cap D)\cup\{u_{\a,\b} : (\a,\b)\in P\}\:(\subseteq D),$$ 
there exist terminal segments $T_k$ of $A_k$ ($1\leq k\leq n$), initial segments $I_k$ of $B_k$ $(1\leq k\leq n),$ and intervals $D_p$ of $D$ $(1\leq p\leq n+1)$ such that 
$$D=D_1+T_1+I_1+D_2+\cdots+D_n+T_n +I_n+D_{n+1},\quad\: G\subseteq D_1+D_2+\cdots+D_{n+1},$$
and for each $p\in\{1,\dots,n+1\}$ the set $G\cap D_p$ contains no endpoints of $D_p$.  Note that each $T_k$ has no maximum, and each $I_k$ has no minimum. 

For $(\a,\b)\in P,$ with $u_{\a,\b}\in D_p$, say, we fix elements $x_{\a,\b},y_{\a,\b}\in D_p$ with $u_{\a,\b}\in(x_{\a,\b},y_{\a,\b})$; such elements exist as $u_{\a,\b}$ is not an endpoint of $D_p$.  Since $\a$ is injective, we have $u_{\a,\b}\a\in(x_{\a,\b}\a,y_{\a,\b}\a).$  It follows that $z\b\in(x_{\a,\b}\a,y_{\a,\b}\a).$  Indeed, if $u_{\a,\b}\a<z\b,$ then certainly $x_{\a,\b}\a<z\b,$ and we have $z\b<y_{\a,\b}\a$ as $u_{\a,\b}\a=\max(\im\a\cap(z\b)^{\downarrow}).$  The case in which $u_{\a,\b}\a>z\b$ is similar.

Now, choose $x\in D_{n+1}+N$ such that $x\geq F\cup\{y_{\a,\b} : (\a,\b)\in P\},$ and write $D_{n+1}+N$ as $D'+E$ where $D'=(D_{n+1}+N)\cap x^{\downarrow}$.  Observe that $E\cap F=\es.$  Moreover, if $N\neq\es$ then $x=\max(C),$ in which case $E=\es$; otherwise, $E$ is unbounded above.  
We now have
$$C=M+D_1+T_1+I_1+D_2+\cdots+D_n+T_n+I_n+D'+E.$$
Using Lemma \ref{lem:shifting}, we choose $\th\in S$ such that:
\begin{enumerate}[leftmargin=*]
\item $(M+D_1+T_1)\th\subseteq T_1$ with $\th$ injective on $M+(F\cap D_1)$ and with $T_1\th$ unbounded above in $T_1$;
\item for each $k\in\{2,\dots,n\},$ we have $(D_k+T_k)\th\subseteq T_k$ with $\th$ injective on $F\cap D_k$ and with $T_k\th$ unbounded above in $T_k$;
\item $(I_n+D')\th\subseteq I_n$ with $\th$ injective on $F\cap D'$ and with $I_n\th$ unbounded below in $I_n$;
\item $\th$ fixes $\bigcup_{j=1}^{n-1}I_j\cup E$ pointwise.
\end{enumerate}
Note that $\im\th\subseteq\bigcup_{k=1}^n(T_k+I_k)\cup E.$  Our construction of $\th$ guarantees that $\im\th\cap F=\es,$ that $\th$ is injective on $F,$ and that $\im\th$ is unbounded above in the case that $N=\es.$
Since $D_r(S;U)=2,$ there exists a $U$-sequence
$$\th=\a\g,\ \b\g=\a'\g',\ \b'\g'=1_C.$$
By the same argument as that in \textit{Case 1}, the map $\a\in U$ is injective, and if $N=\es$ then $\im\a$ is unbounded above.

Suppose first that $\a\in R,$ and note that $z\b\g=z\a'\g'\in J\subseteq F.$  If $z\b\in\im\a,$ then $z\b\g\in\im\a\g=\im\th,$ contradicting that $\im\th\cap F=\es.$  Thus $z\b\notin\im\a,$ so that $(\a,\b)\in P.$  Recalling that $x_{\a,\b},y_{\a,\b}\in D_p$ for some $p\in\{1,\dots,n+1\},$ and that $z\b\in(x_{\a,\b}\a,y_{\a,\b}\a),$ we also have
$$z\b\g\in[x_{\a,\b}\a\g,y_{\a,\b}\a\g]=[x_{\a,\b}\th,y_{\a,\b}\th]\subseteq D_p\th\subseteq
\begin{cases}
T_p&\text{if }p\leq n\\
I_n&\text{if }p=n+1.
\end{cases}$$
But then $z\b\g\in F\cap D\subseteq G$ and $z\b\g\in T_1+\cdots+T_n+I_n$, contradicting the fact that $G\subseteq D_1+\cdots+D_{n+1}$.

Finally, suppose that $\a\notin R,$ so that $\a\in T.$  Then $\mathcal{G}_\a=(A_k,B_k)$ for some $k\in\{1,\dots,n\},$ and $T_k\a<x_\a<I_k\a$.  It follows that 
$$T_k\th=T_k\a\g\leq x_\a\g\leq I_k\a\g=I_k\th.$$
Since $T_k\th\subseteq T_k$ and $I_k\th\subseteq I_k$, and $I_k$ covers $T_k$, it follows that $x_\a\g$ is either an upper bound of $T_k\th$ in $T_k$, or else a lower bound of $I_k\th$ in $I_k$.  But this contradicts the properties of $\th$ given in (1)--(4).
This completes the proof.
\end{proof}

Now, combining Propositions \ref{prop:chain,right}, \ref{prop:endpoint-shiftable}, \ref{prop:noends} and \ref{prop:notES}, we obtain Theorem \ref{thm:B}.

\vspace{1em}

\end{document}